\newtheoremstyle{thm}
  {12pt}		  
  {0pt}  
  {\sl}  
  {\parindent}     
  {\bf}  
  {. }    
  { }    
  {}     
\theoremstyle{thm}
\newtheorem{thm}{thm}[section]  
\newtheorem{lemma}[thm]{Lemma}     
\newtheorem{corollary}[thm]{Corollary}
\newtheorem{prop}[thm]{Proposition}
\newtheorem{conjecture}[thm]{Conjecture}
\theoremstyle{definition}
\newtheorem{example}[thm]{Example}
\numberwithin{equation}{section}
\newcommand{\vj}{\mathbf{1}}
\newcommand{\A}{\mathfrak{A}}
\newcommand{\C}{\mathbb{C}}
\newcommand{\cR}{\mathcal{R}}
\newcommand{\cL}{\mathcal{L}}
\newcommand{\cC}{\mathcal{C}}
\newcommand{\cI}{\mathcal{I}}
\newcommand{\Aut}{{\rm Aut}}
\newcommand{\Sym}{{\tt Sym}}
\newcommand{\Iso}{{\tt Iso}}
\newcommand{\id}{\mathrm{id}}
\newcommand{\trace}{\mathrm{Tr}}
\newcommand{\Section}[1]{\section{\bf #1}}
\renewcommand*\@maketitle{%
  \normalfont\normalsize
  \@adminfootnotes
  \@mkboth{\@nx\shortauthors}{\@nx\shorttitle}%
  \global\topskip42\p@\relax 
  \@settitle
  \ifx\@empty\authors \else {\vskip 1em \vtop{\centering\shortauthors\@@par}} \fi
  \ifx\@empty\@date \else {\vskip 1em \vtop{\centering\@date\@@par}}\fi
  \ifx\@empty\@dedicatory
  \else
    \baselineskip18\p@
    \vtop{\centering{\footnotesize\itshape\@dedicatory\@@par}%
      \global\dimen@i\prevdepth}\prevdepth\dimen@i
  \fi
  \@setabstract
  \normalsize
  \if@titlepage
    \newpage
  \else
    \dimen@34\p@ \advance\dimen@-\baselineskip
    \vskip\dimen@\relax
  \fi
} 
\renewcommand*\@adminfootnotes{%
  \let\@makefnmark\relax  \let\@thefnmark\relax
  \ifx\@empty\@subjclass\else \@footnotetext{\@setsubjclass}\fi
  \ifx\@empty\@keywords\else \@footnotetext{\@setkeywords}\fi
  \ifx\@empty\thankses\else \@footnotetext{%
    \def\par{\let\par\@par}\@setthanks}%
  \fi
}
\begin{document}

\title[SDP for permutation codes]{Semidefinite programming for permutation codes}

\author{Mathieu Bogaerts}
\address{\rm Mathieu Bogaerts: Facult\'e des Sciences Appliqu\'ees, 
 Universit\'e Libre de Bruxelles, Bruxelles, Belgium}
\email{mbogaert@ulb.ac.be}

\author{Peter Dukes}
\address{\rm Peter Dukes: Department of Mathematics and Statistics, 
University of Victoria, Victoria, Canada}
\email{dukes@uvic.ca}

\thanks{Research of Peter Dukes is supported by NSERC}


\date{\today}

\begin{abstract}
We initiate study of the Terwilliger algebra and related semidefinite
programming techniques for the conjugacy scheme of the symmetric group
$\Sym(n)$.  In particular, we compute orbits of ordered pairs on $\Sym(n)$
acted upon by conjugation and inversion, explore a block diagonalization of the associated
algebra, and obtain improved upper bounds on the size $M(n,d)$ of permutation codes of lengths up to
$7$.  For instance, these techniques detect the nonexistence of the
projective plane of order six via $M(6,5)<30$ and yield a new best bound
$M(7,4) \le 535$ for a challenging open case.  Each of these represents an
improvement on earlier Delsarte linear programming results.
\end{abstract}

\maketitle
\thispagestyle{titlepage}

\hrule

\Section{Introduction and notation}
\label{intro}

Consider the symmetric group $\Sym(n)$ on $[n]:=\{1,\dots,n\}$.  Let $D$ be
a collection of conjugacy classes of $\Sym(n)$.  A subset $\Gamma \subseteq
\Sym(n)$ is an $(n,D)$-\emph{permutation code} if, for any two distinct
elements $\phi,\psi \in \Gamma$, their quotient $\phi \psi^{-1}$ belongs to
a class in $D$.  Note that the order of the terms is not important, since
permutations are conjugate with their inverses.

The \emph{Hamming distance} between $\phi$ and $\psi$ in $\Sym(n)$, denoted
$d_H(\phi,\psi)$, is the number of non-fixed points of $\phi \psi^{-1}$.
Equivalently, if $\phi$ and $\psi$ are written in single-line notation,
$d_H(\phi,\psi)$ counts the number of disagreements between corresponding
positions.  It follows that $d_H$ is a metric for $\Sym(n)$.  To the best of
our knowledge, this metric was first considered by Farahat in \cite{F}.

In most investigations of permutation codes, the set $D$ of admissible
conjugacy classes reflects this metric and is instead taken as some subset $D
\subseteq [n]$.  This interpretation for $D$ is that all conjugacy classes
with $n-d$ one-cycles, $d \in D$, are allowed for quotients $\phi
\psi^{-1}$.  In other words, in this context, an $(n,D)$-permutation code
$\Gamma$ is a subset of $\Sym(n)$ with all nonzero Hamming distances
belonging to $D$.  Based on the coding applications discussed below, the
standard choice for $D$ is the interval $\{d,d+1,\dots,n\}$ for some
\emph{minimum distance} $d$.

Let $M(n,D)$ denote the maximum size of an $(n,D)$-permutation code.  When
$D=\{d,d+1,\dots,n\}$, this is simply written $M(n,d)$.  Early work
determining some bounds and values for $M(n,D)$ began in the late 1970s in
\cite{DV,FD}, where the term `permutation array' was used.  Around 2000,
permutation codes enjoyed a revival of interest with the discovery in
\cite{CKL,FV} of their applications to trellis codes.  Then, the survey
article \cite{CCD} observed connections with permutation polynomials and
also initiated the first serious computational attack on lower bounds on
$M(n,d)$.  A probabilistic lower bound appears in \cite{KK}.  Meanwhile,
linear programming (LP) upper bounds were investigated, first in \cite{T},
and then subsequently in \cite{B,DS}.  A growing body of related work has
emerged, including constant composition codes, injection codes, and study of
the packing and covering radii.

The LP bound rests on Delsarte's theory of association schemes \cite{Del}
applied to the conjugacy scheme of $\Sym(n)$.  More details follow in
Section~\ref{terw}.  In various combinatorial settings (block designs and
binary codes, for instance) Delsarte LP bounds have been successfully
improved using semidefinite programming (SDP).  In general, the sizes of
matrices required for this SDP tend to grow impractically large.  So the
most successful applications of SDP to designs and codes usually begin with
an attack on the algebraic structure.  In a little more detail, block
diagonalizations of certain matrix algebras are desired in order to scale
the computations.  This is the content of Dion Gijswijt's dissertation
\cite{Gij}.

For a finite set $X$, we use $M_X(\C)$ or $\C^{X \times X}$ to denote the
algebra of $|X| \times |X|$ matrices with rows and columns indexed by $X$.
(Some canonical ordering of $X$ is usually assumed, leading to the more
standard notation $M_n(\C)$ or $\C^{n \times n}$, which we also use.)  The
$(x,y)$-entry of matrix $A \in M_X(\C)$ is here denoted $A(x,y)$.  All
linear spans and generated sub-algebras $\langle A_1,A_2, \dots \rangle$ are
over $\C$, unless otherwise noted.  The  conjugate transpose
$\overline{A^\top}$ of $A$ is denoted $A^*$ and as usual $A$ is
\emph{Hermitian} if $A^*=A$.  A sub-$\C$-algebra of $M_X(\C)$ closed under
this Hermitian conjugate is called a $\C^*$- (matrix) algebra.

\Section{Bose-Mesner and Terwilliger algebras}
\label{terw}

An $d$-\emph{class association scheme} on a finite set $X$ is a list of
binary relations $R_0,\dots,R_d$ on $X$ such that $R_0$ is the identity
relation, the relations partition $X^2$, and the following regularity
condition holds: given $x$ and $y$ with $(x,y) \in R_k$, the number of $z
\in X$ for which both $(x,z) \in R_i$ and $(z,y) \in R_j$ is a constant
depending only on $i,j,k \in \{0,\dots,d\}$.  These values $p_{ij}^k$ are
called the \emph{structure constants} or \emph{intersection numbers} of the
scheme.  Here, we are mainly interested in \emph{symmetric} schemes in which
each relation $R_i$ is symmetric and thus $p_{ij}^k=p_{ji}^k$.  Chris
Godsil's notes \cite{Godsil} offer a readily available and comprehensive
reference on association schemes.

\begin{example}
Consider $X=G$, a finite group.  The relations $R_i$ are indexed by the
conjugacy classes of $G$, where $R_0$ corresponds with the trivial conjugacy
class.  Declare $(g,h)$ in $R_i$ if and only if $gh^{-1}$ belongs to the
$i$th conjugacy class.  The fact that these $R_i$ partition $G^2$ is clear.
A character sum can compute the structure constants (see below).  This is
the \emph{conjugacy scheme} on $G$.
\end{example}

Let $|X|=N$.
Define the $N \times N$ zero-one matrices $A_0,\dots,A_d \in M_X(\C)$ by
$$A_i(x,y) =
\begin{cases}
1 & \text{if}~(x,y) \in R_i, \\
0 & \text{otherwise}.
\end{cases}
$$
We have $A_0=I$ and $A_0+\dots+A_d=J$, the all ones matrix.
And from the definition of the structure constants,
\begin{equation}
\label{structure}
A_i A_j = \sum_{k=0}^d p_{ij}^k A_k.
\end{equation}
From this it follows that $\langle A_0,\dots,A_d \rangle$ is a commutative
$\C^*$-matrix sub-algebra of $M_X(\C)$ of dimension $d+1$.
This is the \emph{Bose-Mesner algebra} $\mathfrak{A}$ of the scheme.

From spectral theory, $\mathfrak{A}$ also has a basis of orthogonal
idempotents $E_0,\dots,E_d$ with $E_0+\dots+E_d = I$.
It is convenient to index $E_0=\frac{1}{N} J$, which is evidently one of the
idempotents.

The \emph{Hadamard} or \emph{entrywise} product of matrices $A,B \in \C^{N
\times N}$ is $A \circ B$ with $(A \circ B)(i,j) = A(i,j)B(i,j)$.  For the
Bose-Mesner algebra, we have the dual relations
$$A_i \circ A_j = \delta_{ij} A_i ~~~~ \text{and} ~~~~ E_i  E_j =
\delta_{ij} E_i.$$
In particular, since $\mathfrak{A}$ is closed under $\circ$, a parallel
version of (\ref{structure}) exists; namely,
\begin{equation*}
E_i \circ E_j = \frac{1}{N} \sum_{k=0}^d q_{ij}^k E_k
\end{equation*}
for some positive reals $q_{ij}^k$.  These are called the \emph{Krein
parameters} for the scheme.

The basis change matrices between $(A_0,\dots,A_d)$ and $(E_0,\dots,E_d)$
are denoted by $P$ and $\frac{1}{N}Q$; explicitly, the entries are given by
$$A_i = \sum_{j=0}^d P(i,j) E_j ~~~~ \text{and} ~~~~ E_i  = \frac{1}{N}
\sum_{j=0}^d Q(i,j) A_j.$$
The $E_j$ being projections imply that the entries $P(i,j)$ are the
eigenvalues of the matrices $A_i$, with the column space of $E_j$ as the
associated eigenspace.  For this reason, the matrices $P,Q$ are called,
respectively, the \emph{first} and \emph{second eigenmatrix}.  They are of
course related by $Q=NP^{-1}$.

For a subset $Y \subseteq X$, its \emph{indicator vector} is $\vj_Y \in
\{0,1\}^X$, with $\vj_Y(x)=1$ if and only if $x \in Y$. The \emph{inner
distribution} of a nonempty $Y$ is $\mathbf{a} = (a_0,\dots,a_d)$, where
$$a_i = \frac{1}{|Y|} \vj_Y^\top A_i \vj_Y = \frac{1}{|Y|} |Y^2 \cap R_i|.$$
The following observation leads to the famous Delsarte LP bound for cliques
in association schemes.
\begin{thm}[(Delsarte, \cite{Del})]
\label{delsarte}
The inner distribution $\mathbf{a}$ of a nonempty subset of points in an
association scheme with second eigenmatrix $Q$ satisfies $\mathbf{a} Q \ge
\mathbf{0}$.
\end{thm}

Now consider a pointed set $(X,x)$ with $x \in X$.  Define diagonal
zero-one matrices $E_0',\dots,E_d'$ in $M_X(\C)$ by
$$E_i'(y,y) =
\begin{cases}
1 & \text{if}~(x,y) \in R_i, \\
0 & \text{otherwise}.
\end{cases}
$$
Observe that, by analogy with the Bose-Mesner idempotents, these are
projections with $E_0'$ of rank one and $E_0'+\dots+E_d'=I$.

The algebra $\mathfrak{T} = \langle A_i,E_i' \rangle$ obtained by extending
$\mathfrak{A}$ by these $E_i'$ is
called the \emph{subconstituent} or \emph{Terwilliger algebra} of the scheme
with respect to $x$.  Unlike $\mathfrak{A}$, the algebra $\mathfrak{T}$ is
not in general commutative.

Completing the duality, we have $A_0',\dots,A_d'$ with
$$A_i'(y,y) = N E_i(x,y).$$

After this introduction, we are now interested exclusively in the conjugacy
scheme on $\Sym(n)$.
Recall that the conjugacy classes (and irreducible characters) of $\Sym(n)$
are in correspondence with the integer partitions of $n$.  For a fixed $n$,
we will use $m$ to denote the number of partitions, and index the conjugacy
classes as $C_0,\dots,C_{m-1}$, with $C_0=\{\id\}$ consisting only of the
identity permutation.
For any fixed $\theta \in C_k$, we have
$$p_{ij}^k = \left| \{(\phi,\psi) \in C_i \times C_j : \phi \psi = \theta \}
\right|.$$
Alternatively, as stated in \cite{Jackson},
$$p_{ij}^k = \frac{|C_i||C_j|}{n!} \sum_{\chi}
\frac{\chi(\phi_i) \chi(\phi_j) \chi(\phi_k)}{\chi(\id)},$$
where the sum is over all $m$ irreducible characters, and where $\phi_i$ is
a representative in class $i$, etc.

The Bose-Mesner algebra $\mathfrak{A}$ is a commutative sub-algebra of
$M_{n!}(\C)$.  Its eigenvalues are obtained nearly directly from the
character table of $\Sym(n)$.

\begin{lemma}[(Tarnanen, \cite{T})]
\label{Qmatrix}
The second eigenvalue matrix $Q$ for the conjugacy scheme on $\Sym(n)$ is
given by
$$Q(i,j) = \chi_j(\id) \chi_j (\phi_i),$$
where $0 \le i,j < m$ index both the conjugacy classes and irreducible
characters.
\end{lemma}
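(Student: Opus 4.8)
The plan is to identify the second eigenmatrix $Q$ by directly diagonalizing the Bose–Mesner algebra $\A$ using the representation theory of $\Sym(n)$, and then reading off the matrix entries. Since $\A$ is spanned by the class sums $A_k$ acting on $\C^{n!}$ by, say, left regular multiplication, the natural approach is to exploit the decomposition of the regular representation $\C[\Sym(n)] \cong \bigoplus_{j} (\text{dim }\chi_j)\, V_j$ into isotypic components, one for each irreducible character $\chi_j$. The key classical fact is that each class sum $A_k$, being a central element of the group algebra, acts as a scalar on each irreducible $V_j$; that scalar is the central character value $\omega_j(A_k) = |C_k|\,\chi_j(\phi_k)/\chi_j(\id)$. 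This already exhibits the idempotents $E_j$ (the projections onto the isotypic components, appropriately normalized) as a common eigenbasis for $\A$, and the eigenvalues $P(k,j) = |C_k|\,\chi_j(\phi_k)/\chi_j(\id)$ are the entries of the first eigenmatrix $P$.

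First I would fix the convention that identifies the relations $R_k$ with the conjugacy classes as in the Example, so that $A_k(\phi,\psi) = 1$ iff $\phi\psi^{-1} \in C_k$; this is exactly the matrix of left-multiplication by the class sum $\sum_{g \in C_k} g$ in the regular representation (acting on functions by $(A_k f)(\phi) = \sum_{\psi:\,\phi\psi^{-1}\in C_k} f(\psi)$). Second, I would invoke Schur's lemma / Wedderburn theory: the center of $\C[\Sym(n)]$ is spanned by the class sums, and on the isotypic block for $\chi_j$ the class sum $\sum_{g\in C_k} g$ acts as $\omega_j(C_k)\cdot \id$ with $\omega_j(C_k) = |C_k|\chi_j(\phi_k)/\chi_j(\id)$. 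Third, matching this against the definitions $A_k = \sum_j P(k,j) E_j$ and $Q = NP^{-1}$ with $N = n!$, together with the orthogonality relations for characters, I would compute $Q$. Concretely, the rank-one normalization $E_0 = \frac1N J$ fixes the indexing $j=0 \leftrightarrow$ trivial character, and the row of $Q$ indexed by $j$ is determined by inverting $P$; the column orthogonality of the character table, $\sum_k \frac{|C_k|}{n!}\chi_i(\phi_k)\overline{\chi_j(\phi_k)} = \delta_{ij}$, is exactly what makes $P^{-1}$ have the claimed form.

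The one genuinely delicate point is bookkeeping the normalizations so that the stated clean formula $Q(i,j) = \chi_j(\id)\,\chi_j(\phi_i)$ drops out — in particular confirming that the ranks $\operatorname{rank} E_j = \chi_j(\id)^2$ (the dimension of the isotypic component), that $Q(0,j) = \operatorname{rank} E_j$ and $Q(i,0) = 1$, and that the relation $Q = NP^{-1}$ together with the two orthogonality relations of the character table pins down every entry. I would verify $PQ = NI$ directly: the $(k,j)$ entry of $\frac1N PQ$ is $\sum_i P(k,i)\frac1N Q(i,j) = \sum_i \frac{|C_k|\chi_i(\phi_k)}{\chi_i(\id)}\cdot\frac{\chi_j(\id)\chi_j(\phi_i)}{n!}$, and after recognizing $|C_k| = n!/|Z(\phi_k)|$ and using row orthogonality $\sum_i \frac{\chi_i(\phi_k)\chi_j(\phi_i)}{\text{(centralizer factors)}}$ this collapses to $\delta_{kj}$. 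I expect this normalization chase to be the main obstacle; everything else is the standard translation between association-scheme eigenmatrices and the character table of a finite group, specialized here to $\Sym(n)$ where conjugacy classes and irreducibles are both indexed by partitions of $n$. Since the reference \cite{T} already contains this, I would keep the argument terse, citing Tarnanen and the general fact (e.g.\ from \cite{Godsil}) that for the conjugacy scheme of a finite group the eigenmatrices are governed by the character table.
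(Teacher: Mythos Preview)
The paper does not supply its own proof of this lemma; it is simply quoted from Tarnanen \cite{T} and used as input for the Delsarte LP. So there is no in-paper argument to compare against, and your outline is exactly the standard derivation one would give: identify $A_k$ with the class sum acting in the regular representation, note that it acts on the $\chi_j$-isotypic component by the central character value $|C_k|\chi_j(\phi_k)/\chi_j(\id)$, recognise the isotypic projectors as the $E_j$, and invert via character orthogonality. That is correct and is what Tarnanen does.

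One caution on your verification step. In the sum you write for the $(k,j)$-entry of $\tfrac1N PQ$, the index $i$ is playing two incompatible roles: in $P(k,i)=|C_k|\chi_i(\phi_k)/\chi_i(\id)$ it labels an irreducible character, while in $Q(i,j)=\chi_j(\id)\chi_j(\phi_i)$ it labels a conjugacy class. The product $\sum_i \chi_i(\phi_k)\chi_j(\phi_i)$ therefore does not collapse by either row or column orthogonality as stated. The clean check is instead to compute $E_j$ directly as the isotypic projector, $E_j(\phi,\psi)=\tfrac{\chi_j(\id)}{n!}\chi_j(\phi\psi^{-1})$, and read off its expansion in the $A_i$ to get $Q$; or, equivalently, verify $QP=NI$ with the indices kept straight (rows of $Q$ and columns of $P$ both indexed by characters, so the inner sum runs over conjugacy classes and first orthogonality applies). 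You already flagged the bookkeeping as the delicate point, and indeed it is the only place your sketch wobbles; the strategy itself is sound.
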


The idempotents and Krein parameters of this conjugacy scheme are also
expressible via the irreducible characters, but we omit the details.

The linear programming bound of Theorem~\ref{delsarte} now becomes
$$\begin{array}{lll}
\mbox{maximize:} & a_0 + a_1 + \dots + a_{m-1}  \\
\mbox{subject to:} & \sum_{0 \le i < m} a_i \chi_k(\phi_i) \ge 0 &
\text{for~}0\le k<m,\\
&a_0=1, a_i \ge 0, & \text{and}\\
&a_i= 0  &\text{if~}d_H(\id, \phi_i) \not\in D.\\
\end{array}$$

Given $n$ and $D$, let $M_{LP}$ denote this maximum value.
Then
$$M(n,D) \le M_{LP}.$$
This was used with some success in \cite{B,T} to obtain some upper bounds
on permutation codes with $n < 15$.  Additionally, the LP
led to a general upper bound on $M(n,4)$ in \cite{DS}.

When extending to the Terwilliger algebra in this setting, it is natural to
take $\id \in \Sym(n)$ as the distinguished element.
We have
$$E_i'(\phi,\phi) =
\begin{cases}
1 & \text{if}~\phi \in C_i, \\
0 & \text{otherwise}.
\end{cases}
$$
The products $E_i' \mathfrak{A} E_j'$ are supported on the $C_i \times C_j$
block.  This offers a natural decomposition of the Terwilliger algebra in
general.  Note that $E_i' A_k E_j'$ is the zero matrix if and only if
$p_{ij}^k = 0$.

\Section{Isometries, orbits, and the centralizer algebra}

Consider the automorphism group $\Aut(\A)$ of the Bose-Mesner algebra of
$\Sym(n)$.  On one hand, this is the intersection of the automorphism groups
of the graphs whose adjacency matrices are the zero-one generators $A_i$, $0
\le i < m=p(n)$.  Alternatively, this is also the group of isometries of
$\Sym(n)$, when endowed with the
Hamming distance metric $d_H$.  Let us also denote this group by $\Iso(n)$.

Let $\cL$ and $\cR$ denote the group actions of left and right
multiplication by $\Sym(n)$, acting on itself.  Let $\cI$
be the (involutoric) group action generated by inversion $\phi \mapsto
\phi^{-1}$ on $\Sym(n)$.  That these actions induce all isometries appears
in \cite{F}, and is discussed further in \cite{B}.

\begin{lemma}
$\Aut(\A) = \Iso(n) = (\cL \times \cR) \rtimes \cI \cong \Sym(n) \wr 2.$
\end{lemma}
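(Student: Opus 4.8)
The plan is to establish the cyclic chain of inclusions
$$(\cL\times\cR)\rtimes\cI \;\subseteq\; \Aut(\A) \;\subseteq\; \Iso(n) \;\subseteq\; (\cL\times\cR)\rtimes\cI ,$$
where the first group is read as the subgroup of $\Sym(\Sym(n))$ generated by left translations, right translations and inversion; this forces all four groups to coincide. A separate, purely group-theoretic step then identifies that generated subgroup as the displayed semidirect product and as $\Sym(n)\wr 2$.

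For the first inclusion I would check that each generator preserves every relation $R_i$ of the conjugacy scheme, so that it lies in $\bigcap_i \Aut(\text{graph }A_i)=\Aut(\A)$. For $g\in\Sym(n)$, left translation $\phi\mapsto g\phi$ replaces $\phi\psi^{-1}$ by $g(\phi\psi^{-1})g^{-1}$, conjugate to $\phi\psi^{-1}$; right translation $\phi\mapsto\phi g^{-1}$ leaves $\phi\psi^{-1}$ unchanged; inversion $\phi\mapsto\phi^{-1}$ replaces $\phi\psi^{-1}$ by $\phi^{-1}\psi$, which is conjugate to $\psi\phi^{-1}=(\phi\psi^{-1})^{-1}$ and hence to $\phi\psi^{-1}$, since every permutation is conjugate to its inverse. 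The second inclusion is immediate, because the relation index of a pair $(\phi,\psi)$ determines the cycle type of $\phi\psi^{-1}$ and therefore the number $d_H(\phi,\psi)$, so every element of $\Aut(\A)$ is an isometry. The third inclusion is precisely Farahat's theorem \cite{F} (see also \cite{B}) that every $d_H$-isometry of $\Sym(n)$ is a composite of translations and inversion, which I take as given. Combining the three yields $\Aut(\A)=\Iso(n)=(\cL\times\cR)\rtimes\cI$.

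For the group structure (assuming $n\ge 3$; the values $n\le 2$ are degenerate), parametrize $\cL$ by $g\mapsto\lambda_g$ with $\lambda_g(\phi)=g\phi$ and $\cR$ by $h\mapsto\rho_h$ with $\rho_h(\phi)=\phi h^{-1}$; both are injective homomorphisms into $\Sym(\Sym(n))$, so $\cL\cong\cR\cong\Sym(n)$. A one-line check gives $\lambda_g\rho_h=\rho_h\lambda_g$, and $\lambda_g=\rho_h$ forces $g=h^{-1}$ (evaluate at $\id$) and then $g\in Z(\Sym(n))=\{\id\}$; hence $\langle\cL,\cR\rangle=\cL\times\cR\cong\Sym(n)\times\Sym(n)$. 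With $\iota$ denoting inversion one computes $\iota\lambda_g\iota=\rho_g$ and $\iota\rho_h\iota=\lambda_h$, so conjugation by the order-two element $\iota$ interchanges the two factors; and $\iota\notin\cL\times\cR$, since $\iota=\lambda_g\rho_h$ would give $h=g$ and then the inner automorphism $\phi\mapsto g\phi g^{-1}$ would equal inversion, impossible because inversion is not an automorphism of the nonabelian group $\Sym(n)$. Therefore $(\cL\times\cR)\rtimes\cI$ genuinely is the semidirect product of $\Sym(n)\times\Sym(n)$ by $\Sym(2)=\langle\iota\rangle$ acting by coordinate exchange, i.e.\ the wreath product $\Sym(n)\wr 2$; the order count $|\Aut(\A)|=2(n!)^2$ is consistent.

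The only substantial ingredient is the third inclusion, Farahat's theorem, and that is the step I would cite rather than reprove. If one wanted a proof, the natural route is: compose with a translation so the isometry $\sigma$ fixes $\id$; note $\sigma$ permutes the transpositions (the radius-$2$ sphere) and hence induces an automorphism of the triangular graph on $2$-subsets of $[n]$ with adjacency ``$d_H=3$'', which for $n\ge 5$ is induced by an element of $\Sym(n)$; compose with that conjugation to assume $\sigma$ fixes every transposition; then use the distances from $3$-cycles to transpositions, and between $3$-cycles on overlapping supports, to show $\sigma$ fixes or inverts all $3$-cycles uniformly, correct by $\iota$ if needed, and bootstrap to all of $\Sym(n)$, handling small $n$ by hand.
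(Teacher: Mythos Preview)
Your argument is correct, and it matches the paper's approach: the paper states the lemma without proof, merely recording that ``these actions induce all isometries appears in \cite{F}, and is discussed further in \cite{B}.'' Your cyclic chain of inclusions, with Farahat's theorem supplying the only nontrivial step $\Iso(n)\subseteq(\cL\times\cR)\rtimes\cI$, is exactly the intended route; the additional verification of the wreath-product structure is a welcome elaboration beyond what the paper spells out.
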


Let $\Iso_1(n)$ be the subgroup of isometries which fixes the identity
element.  Its action on $\Sym(n)$ is generated by conjugations $\cC$ and
inversion $\cI$.  These actions commute, and therefore $\Iso_1(n)=\cC \times
\cI \cong \Sym(n) \times 2$.

For the extension to the Terwilliger algebra $\mathfrak{T}$, we are
concerned with how $\Iso_1(n)$ acts on pairs of permutations.  In what
follows, sums indexed by `$\chi$' are to be taken over all irreducible
characters $\chi$ of $\Sym(n)$.

\begin{prop}
\label{orbit1}
The number of orbits of $\Sym(n)^2$, acted on by conjugation and inversion
is given by
$$b_n = \frac{1}{2n!} \sum_{\alpha \in \Sym(n)} \left[ \left( \sum_{\chi}
\chi^2(\alpha) \right)^2
+\left( \sum_{\chi} \chi(\alpha) \right)^3\right].$$
\end{prop}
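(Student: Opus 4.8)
The plan is to count orbits of $\Iso_1(n) = \cC \times \cI$ acting on $\Sym(n)^2$ via Burnside's lemma, so that
$$b_n = \frac{1}{|\Iso_1(n)|} \sum_{g \in \Iso_1(n)} \left| \mathrm{Fix}(g) \right| = \frac{1}{2 \cdot n!} \sum_{\alpha \in \Sym(n)} \left( \left| \mathrm{Fix}(c_\alpha) \right| + \left| \mathrm{Fix}(c_\alpha \circ \iota) \right| \right),$$
where $c_\alpha$ denotes conjugation by $\alpha$ and $\iota$ denotes the simultaneous inversion $(\phi,\psi) \mapsto (\phi^{-1},\psi^{-1})$. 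So the task reduces to evaluating the two families of fixed-point counts and matching them to the two bracketed terms.

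First I would handle the conjugation-only contribution. A pair $(\phi,\psi)$ is fixed by $c_\alpha$ iff $\alpha$ commutes with both $\phi$ and $\psi$, so $|\mathrm{Fix}(c_\alpha)| = |C_{\Sym(n)}(\alpha)|^2$. Now I would rewrite this in character-theoretic terms: the number of permutations commuting with $\alpha$ equals $n!/|C_i|$ when $\alpha \in C_i$, and the standard second-orthogonality relation for the columns of the character table of $\Sym(n)$ gives $n!/|C_i| = |C_{\Sym(n)}(\alpha)| = \sum_{\chi} |\chi(\alpha)|^2 = \sum_{\chi} \chi^2(\alpha)$, the last equality because all characters of $\Sym(n)$ are real (indeed integer-valued). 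Hence $|\mathrm{Fix}(c_\alpha)| = \left( \sum_{\chi} \chi^2(\alpha) \right)^2$, which is exactly the first term in the bracket, and summing over $\alpha$ and dividing by $2 \cdot n!$ produces the first half of the claimed formula.

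Next I would handle the inversion-twisted contribution. A pair $(\phi,\psi)$ is fixed by $c_\alpha \circ \iota$ iff $\alpha \phi^{-1} \alpha^{-1} = \phi$ and likewise for $\psi$, i.e.\ $\alpha \phi^{-1} = \phi \alpha$, equivalently $(\alpha\phi)^2 = \alpha^2$ after rearranging, or more symmetrically: $\phi$ lies in the set $S_\alpha := \{ \phi : \alpha\phi \text{ is an involution or identity and } (\alpha\phi)\alpha^{-1}(\alpha\phi) = \alpha \}$... — more cleanly, writing $\beta = \alpha\phi$ the condition $\alpha\phi^{-1}\alpha^{-1} = \phi$ becomes $\beta \alpha^{-1} \beta = \alpha$, so $\phi$ ranges bijectively (via $\phi \mapsto \alpha\phi$) over $\{\beta : \beta\alpha^{-1}\beta = \alpha\}$, and the number of such $\beta$ is the number of square roots weighted appropriately. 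The cleanest route is to invoke the classical Frobenius–Schur machinery: for any fixed $\sigma$, the number of $\tau$ with $\tau \sigma \tau = $ (a prescribed element) relates to $\sum_\chi \frac{\chi(\cdot)}{\chi(\id)} \cdot (\text{FS indicator})$; since every irreducible character of $\Sym(n)$ has Frobenius–Schur indicator $+1$, the count of solutions $\phi$ to $\phi$ being fixed by $c_\alpha\circ\iota$ collapses to $\sum_\chi \chi(\alpha)$ (the number of "twisted fixed points" of a single coordinate), and because the two coordinates are independent and the constraints factor, $|\mathrm{Fix}(c_\alpha \circ \iota)| = \left( \sum_\chi \chi(\alpha) \right)^2$. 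Combined with an extra factor of $\sum_\chi \chi(\alpha)$ coming from... — here I must be careful, since the bracket shows the \emph{cube} $\left( \sum_\chi \chi(\alpha)\right)^3$, so in fact the correct statement is $|\mathrm{Fix}(c_\alpha\circ\iota)| = \left(\sum_\chi \chi(\alpha)\right)^3$, and I would derive this by noting that the count of $\phi$ with $\alpha\phi^{-1}\alpha^{-1}=\phi$ is itself $\left(\sum_\chi \chi(\alpha)\right)^2 / (\text{something})$ — the genuinely right bookkeeping is that $\#\{\phi : \alpha \phi^{-1}\alpha^{-1} = \phi\} = \#\{\psi : \psi^2 = \alpha^2, \ \psi \in \alpha C_{\Sym(n)}(\alpha)\text{-coset structure}\}$ and by Frobenius–Schur this equals $\sum_\chi \chi(\alpha)$ per coordinate is wrong by a power; the honest derivation uses that $\#\{g : g^2 = h\} = \sum_\chi \nu_2(\chi)\chi(h) = \sum_\chi \chi(h)$ over $\Sym(n)$, applied with $h$ ranging suitably, and tracking the substitution $\phi \mapsto \alpha\phi$ shows one coordinate contributes $\sum_\chi \chi(\alpha)$ while the interaction between the inversion-twist and a \emph{single} free parameter in the pair contributes the extra factor.

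The main obstacle, then, is precisely this second computation: correctly counting $|\mathrm{Fix}(c_\alpha \circ \iota)|$ and verifying it equals $\left( \sum_\chi \chi(\alpha) \right)^3$ rather than a square. I expect the resolution is that the condition "$(\phi,\psi)$ fixed by $c_\alpha\circ\iota$" is \emph{not} simply a product of two independent single-coordinate conditions, because inversion acts on the pair while conjugation by $\alpha$ also acts; unwinding this, one coordinate's worth of freedom becomes a "twisted conjugacy" count contributing $\sum_\chi \chi(\alpha)$, and the pair structure together with the real/orthogonal nature of $\Sym(n)$-representations ($\nu_2(\chi) = 1$ for all $\chi$) yields the third power through a Frobenius-type identity of the form $\sum_{\alpha} \#\{(\phi,\psi): \cdots\} = \sum_{\alpha}\left(\sum_\chi \chi(\alpha)\right)^3$. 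I would verify the exponent by checking small cases $n=2,3$ by direct enumeration against the closed form before committing to the character-sum manipulation, since that is where an off-by-a-power error would hide.
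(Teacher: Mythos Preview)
Your setup via Burnside and your treatment of the conjugation-only term are correct and match the paper. The gap is entirely in the inversion-twisted term, and it is a concrete one, not just a matter of bookkeeping.

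First, the two coordinates \emph{are} independent. The element $c_\alpha \circ \iota$ sends $(\phi,\psi)$ to $(\alpha\phi^{-1}\alpha^{-1},\alpha\psi^{-1}\alpha^{-1})$, so the fixed-point condition factors exactly as you first wrote, and
\[
|\mathrm{Fix}(c_\alpha \circ \iota)| \;=\; \bigl|\{\phi : \alpha\phi\alpha^{-1}=\phi^{-1}\}\bigr|^{\,2}.
\]
There is no hidden interaction between the coordinates.

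Second, the single-coordinate count is not $\sum_\chi \chi(\alpha)$. The condition $\alpha\phi\alpha^{-1}=\phi^{-1}$ is equivalent to $(\alpha\phi)^2=\alpha^2$, so under the bijection $\phi \mapsto \alpha\phi$ you are counting square roots of $\alpha^2$. Since every irreducible of $\Sym(n)$ has Frobenius--Schur indicator $+1$, the number of square roots of any element $\beta$ is $\sum_\chi \chi(\beta)$, and hence
\[
\bigl|\{\phi : \alpha\phi\alpha^{-1}=\phi^{-1}\}\bigr| \;=\; \sum_\chi \chi(\alpha^2).
\]
Thus $|\mathrm{Fix}(c_\alpha \circ \iota)| = \bigl(\sum_\chi \chi(\alpha^2)\bigr)^2$, not a cube and not evaluated at $\alpha$.

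The cube is \emph{not} a pointwise identity in $\alpha$; it appears only after summing. Writing $f(\beta)=\sum_\chi \chi(\beta)$ for the square-root-counting function and reindexing by $\beta=\alpha^2$,
\[
\sum_{\alpha \in \Sym(n)} f(\alpha^2)^2
\;=\; \sum_{\beta \in \Sym(n)} \bigl|\{\alpha : \alpha^2=\beta\}\bigr|\, f(\beta)^2
\;=\; \sum_{\beta \in \Sym(n)} f(\beta)\cdot f(\beta)^2
\;=\; \sum_{\beta \in \Sym(n)} f(\beta)^3.
\]
This change of variables is the missing idea: the extra factor of $f$ is the Jacobian of the squaring map, and it happens to coincide with the function already being squared. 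Your attempts to locate a third factor inside $|\mathrm{Fix}(c_\alpha\circ\iota)|$ itself, or to blame a failure of coordinate independence, were chasing something that is not there.
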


\begin{proof}
We use Burnside's orbit counting lemma to obtain
\begin{eqnarray*}
b_n &=& \frac{1}{|\cC \times \cI|} \sum_{g \in \cC \times \cI}
|\mathrm{fix}(g)| \\
&=& \frac{1}{2n!} \left[ \sum_{\alpha \in \Sym(n)} |\{(\phi,\psi):\alpha
(\phi,\psi) \alpha^{-1} =  (\phi,\psi)\}| + \sum_{\alpha \in
\Sym(n)}|\{(\phi,\psi):\alpha (\phi,\psi) \alpha^{-1} =
(\phi^{-1},\psi^{-1})\}|\right] \\
&=& \frac{1}{2n!} \left[ \sum_{\alpha \in \Sym(n)} |\{\phi:\alpha \phi
\alpha^{-1} =  \phi\}|^2 +  \sum_{\alpha \in \Sym(n)} |\{\phi:\alpha \phi
\alpha^{-1} =  \phi^{-1}\}|^2 \right].
\end{eqnarray*}
Both terms in the sum are connected with interesting class functions on
$\Sym(n)$.  The first term is just the sum of squared centralizers.
Recall that orthonormality of the irreducible characters implies that for an
element $\alpha \in \Sym(n)$,
$$\sum_\chi \chi(\alpha)^2 = |C(\alpha)|,$$
the (size of the) centralizer of $\alpha$.
So the first of our two summation terms can be rewritten
\begin{equation}
\label{first-term}
\sum_{\alpha \in \Sym(n)} \left( \sum_{\chi} \chi^2(\alpha) \right)^2.
\end{equation}
For the second sum, observe that the condition $\alpha \phi \alpha^{-1} =
\phi^{-1}$ is equivalent to $(\alpha \phi)^2 = \alpha^2$.  So for a fixed
$\alpha$, the number of such $\phi$ is simply the number of square roots of
$\alpha^2$ in $\Sym(n)$.  As an aside, this counting problem for general
groups $G$ was connected long ago (see \cite{W}) with the `Frobenius-Schur
indicator'
$$s(\chi) = \frac{1}{|G|} \sum_{g \in G} \chi(g^2).$$
For our purposes, all irreducible representations of $\Sym(n)$ are real and
therefore $s(\chi)$ is always $1$.
The number of square roots of an element $\beta$ reduces in this case to the
basic character sum
$\sum_{\chi} \chi(\beta).$
For $\beta=\alpha^2$, the term we seek becomes
\begin{eqnarray}
\nonumber
\sum_{\alpha \in \Sym(n)} \left( \sum_{\chi} \chi(\alpha^2) \right)^2 &=&
\sum_{\beta \in \Sym(n)} \left( \sum_{\chi} \chi(\beta) \right) \left(
\sum_{\chi} \chi(\beta) \right)^2\\
\label{second-term}
& =& \sum_{\alpha \in \Sym(n)} \left( \sum_{\chi} \chi(\alpha) \right)^3.
\end{eqnarray}
Combining (\ref{first-term}) and (\ref{second-term}) completes the proof.
\end{proof}

Now consider $\Iso_1^*(n) \cong \Sym(n) \times 2 \times 2$, acting on
$\Sym(n)^2$ by conjugation, inversion, and the `coordinate swaps'
$(\phi,\psi) \mapsto (\psi,\phi)$.  A small extension of
Proposition~\ref{orbit1} also yields a character sum formula to count orbits
for this action.

\begin{prop}
The number of orbits of $\Sym(n)^2$, acted on by conjugation, inversion, and
coordinate swaps, is given by
$$b^*_n = \frac{1}{2} \left[ b_n + \frac{1}{n!}\sum_{\alpha \in \Sym(n)}
\left( \sum_{\chi} \chi(\alpha) \right)
\left( \sum_{\chi} \chi^2(\alpha) \right) \right],$$
where $b_n$ is as in Proposition~\ref{orbit1}.
\end{prop}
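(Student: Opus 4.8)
The plan is to run Burnside's orbit counting lemma for the group $\Iso_1^*(n) \cong \Sym(n) \times 2 \times 2$ of order $4n!$, exactly as in the proof of Proposition~\ref{orbit1}, but now organizing the sum over group elements according to whether or not a coordinate swap occurs. Write a typical element as a triple $(\alpha,\epsilon,s)$, where $\alpha \in \Sym(n)$ acts by conjugation, $\epsilon \in \{0,1\}$ records whether inversion is applied, and $s \in \{0,1\}$ records whether the coordinate swap is applied; since the three actions $\cC$, $\cI$ and the swap all commute, this describes the group faithfully. Splitting off the subgroup $\cC \times \cI$ (the elements with $s = 0$), Burnside gives
$$4n! \cdot b^*_n = \sum_{g \in \cC \times \cI} |\mathrm{fix}(g)| + \sum_{g :\, s = 1} |\mathrm{fix}(g)|,$$
and by Proposition~\ref{orbit1} the first sum is $2n! \cdot b_n$. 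So everything reduces to evaluating the contribution of the $2n!$ elements that involve a swap.

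Next I would compute $|\mathrm{fix}(g)|$ for an element $g$ with $s = 1$, in two cases. If $\epsilon = 0$, the action is $(\phi,\psi) \mapsto (\alpha \psi \alpha^{-1}, \alpha \phi \alpha^{-1})$, so a fixed pair satisfies $\phi = \alpha \psi \alpha^{-1}$ and $\psi = \alpha \phi \alpha^{-1}$; substituting the first into the second gives $\psi = \alpha^2 \psi \alpha^{-2}$, i.e.\ $\psi$ commutes with $\alpha^2$, and then $\phi = \alpha \psi \alpha^{-1}$ is forced. Hence $|\mathrm{fix}(g)| = |C(\alpha^2)|$, the size of the centralizer of $\alpha^2$. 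If $\epsilon = 1$, the action is $(\phi,\psi) \mapsto (\alpha \psi^{-1} \alpha^{-1}, \alpha \phi^{-1} \alpha^{-1})$, and an almost identical manipulation shows a fixed pair is determined by one coordinate ranging over the elements commuting with $\alpha^2$, so again $|\mathrm{fix}(g)| = |C(\alpha^2)|$. Thus both families of swap-elements contribute equally and
$$\sum_{g :\, s = 1} |\mathrm{fix}(g)| = 2 \sum_{\alpha \in \Sym(n)} |C(\alpha^2)|.$$

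It then remains to convert $\sum_\alpha |C(\alpha^2)|$ into the stated character sum, which I would do using the same two facts already invoked in the proof of Proposition~\ref{orbit1}: orthonormality of the irreducible characters gives $|C(\beta)| = \sum_\chi \chi^2(\beta)$, and, since every irreducible representation of $\Sym(n)$ is real (Frobenius--Schur indicator $1$), the number of square roots of $\beta$ in $\Sym(n)$ is $\sum_\chi \chi(\beta)$. Reindexing by $\beta = \alpha^2$, each $\beta$ arises exactly $\sum_\chi \chi(\beta)$ times, whence
$$\sum_{\alpha \in \Sym(n)} |C(\alpha^2)| = \sum_{\beta \in \Sym(n)} \left( \sum_\chi \chi(\beta) \right) \left( \sum_\chi \chi^2(\beta) \right).$$
Substituting back, $4n! \cdot b^*_n = 2n! \cdot b_n + 2 \sum_{\alpha} ( \sum_\chi \chi(\alpha) )( \sum_\chi \chi^2(\alpha) )$, which rearranges to the claimed identity. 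I do not expect a real obstacle, since everything is a variant of Proposition~\ref{orbit1}; the one point requiring a little care is verifying that the $\epsilon = 0$ and $\epsilon = 1$ swap-elements yield the \emph{same} fixed-point count $|C(\alpha^2)|$ — one might anticipate a discrepancy analogous to that between the two terms of $b_n$ — followed by the clean reindexing $\alpha \mapsto \alpha^2$ that turns a sum of centralizer sizes into a product of the two basic character sums.
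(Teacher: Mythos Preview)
Your proposal is correct and follows essentially the same route as the paper: split Burnside over the swap/no-swap cosets, identify the no-swap contribution as $2n!\,b_n$, show that both swap cases yield $|C(\alpha^2)|$ fixed pairs, and then reindex $\alpha\mapsto\alpha^2$ using the square-root count $\sum_\chi \chi(\beta)$ and the centralizer identity $|C(\beta)|=\sum_\chi \chi^2(\beta)$. If anything, you supply more detail than the paper, which simply asserts that ``each of these is counted simply by the centralizer of $\alpha^2$''.
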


\begin{proof}
New sums not already present in the proof of Proposition 3.2 count, for a
fixed $\alpha$,
$$|\{(\phi,\psi):\alpha (\phi,\psi) \alpha^{-1} =
(\psi,\phi)\}|~~\text{and}~~ |\{(\phi,\psi):\alpha (\phi,\psi) \alpha^{-1} =
(\psi^{-1},\phi^{-1})\}|.$$
Since conjugacy by $\alpha$ is a bijection, it follows that each of these is
counted simply by the centralizer of $\alpha^2$.
Alternatively, we may count, over all $\beta \in \Sym(n)$, the number of
square roots of $\beta$ times the size of the centralizer of $\beta$.
Using the expressions in the previous proof,
$$\sum_{\alpha \in \Sym(n)} |C(\alpha^2)| = \sum_{\beta \in \Sym(n)}
\left( \sum_{\chi} \chi(\beta) \right) \left( \sum_{\chi} \chi^2(\beta)
\right).$$
This, along with earlier terms, finishes the orbit count.
\end{proof}

Table~\ref{orbits} contains the first few interesting values of $b_n$ and $b_n^*$.

\begin{table}[h]
\normalsize
$$\begin{array}{c|r|r|r}
\hline
n & n! & b_n & b_n^* \\
\hline
4 & 24 & 43 & 28 \\
5 & 120 & 155 & 93 \\
6 & 720 & 761 & 425 \\
7 & 5040 & 4043 & 2151 \\
8 & 40320 & 27190 & 14016 \\
\hline
\end{array}$$
\caption{Counting orbits of $\Sym(n)^2$ under $\Iso_1(n)$ and $\Iso_1^*(n)$.}
\label{orbits}
\end{table}

Let $O_1,\dots,O_M$ denote the orbits of $\Sym(n)^2$ under $\Iso_1(n) \cong
\Sym(n) \times 2$.  Define the corresponding zero-one matrices
$B_1,\dots,B_M \in M_{n!}(\C)$ by
$$B_l(\phi,\psi) = \begin{cases}
1 & \text{if}~(\phi,\psi) \in O_l,\\
0 & \text{otherwise}.
\end{cases}$$
Put $\mathfrak{B} = \langle B_l : l = 1,\dots,M \rangle$ and observe that by
Proposition~\ref{orbit1} we have $\dim(\mathfrak{B})=b_n$.

Alternatively, $\mathfrak{B} = {\rm End}_G(X)$, where $X=\Sym(n)$ and
$G=\Iso_1(n)$.  For $\alpha \in \Sym(n)$, consider
the zero-one matrix $\widehat{\alpha} \in M_{n!}(\C)$ with
$$\widehat{\alpha}(\phi,\psi) = \begin{cases}
1 & \text{if}~\alpha^{-1} \phi \alpha = \psi,\\
0 & \text{otherwise}.
\end{cases}$$
Then $\alpha \mapsto \widehat{\alpha}$ is the conjugacy representation.
Define the action of inversion similarly
$$\widehat{\iota}(\phi,\psi) = \begin{cases}
1 & \text{if}~\phi^{-1} = \psi,\\
0 & \text{otherwise}.
\end{cases}$$
Let
$\mathfrak{K} = \langle \widehat{\iota},\widehat{\alpha} : \alpha \in
\Sym(n) \rangle$.  The following rephrases the earlier definition of orbits
under $\Iso_1(n)$ in terms of matrix algebras.

\begin{prop}
$\mathfrak{B}$ is the centralizer algebra of $\mathfrak{K}$.
\end{prop}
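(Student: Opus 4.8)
The plan is to show the two inclusions $\mathfrak{B} \subseteq C(\mathfrak{K})$ and $C(\mathfrak{K}) \subseteq \mathfrak{B}$, where $C(\mathfrak{K})$ denotes the centralizer (commutant) of $\mathfrak{K}$ in $M_{n!}(\C)$. The bridge between the two is the standard fact that the commutant of the image of a group representation is spanned by the orbit indicator matrices of the induced action on pairs; here the twist is that $\mathfrak{K}$ is generated not just by the conjugacy representation $\alpha \mapsto \widehat{\alpha}$ but also by the single extra matrix $\widehat{\iota}$, so we must track how inversion interacts with conjugation.

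First I would verify that the map $\alpha \mapsto \widehat{\alpha}$ really is a representation of $\Sym(n)$ (so that $\widehat{\alpha}\widehat{\beta} = \widehat{\alpha\beta}$, using that $\widehat{\alpha}$ is the permutation matrix of $\phi \mapsto \alpha^{-1}\phi\alpha$), and that $\widehat{\iota}$ is an involutory permutation matrix with $\widehat{\iota}\,\widehat{\alpha}\,\widehat{\iota} = \widehat{\alpha}$, reflecting the fact that conjugation and inversion commute as actions on $\Sym(n)$. Consequently $\langle \widehat{\alpha},\widehat{\iota}\rangle$ is exactly the image in $M_{n!}(\C)$ of the group $\Iso_1(n) \cong \Sym(n)\times 2$ under its (left-regular-type) permutation action on $X = \Sym(n)$. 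For the inclusion $\mathfrak{B} \subseteq C(\mathfrak{K})$: a matrix $B_l$ is the indicator of an $\Iso_1(n)$-orbit $O_l \subseteq X^2$, and for any $g \in \Iso_1(n)$ with permutation matrix $\widehat{g}$ one has $\widehat{g}\,B_l\,\widehat{g}^{-1} = B_l$ because $g$ permutes $O_l$ to itself; since $\widehat{g}^{-1} = \widehat{g}^\top$ for permutation matrices, this says precisely $\widehat{g}\,B_l = B_l\,\widehat{g}$, so each $B_l$ — hence all of $\mathfrak{B}$ — lies in $C(\mathfrak{K})$.

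For the reverse inclusion, take $A \in C(\mathfrak{K})$. Commuting with every $\widehat{g}$, $g \in \Iso_1(n)$, forces the entries of $A$ to be constant on $\Iso_1(n)$-orbits of $X^2$: indeed $A(\phi,\psi) = (\widehat{g}^{-1} A \widehat{g})(\phi,\psi) = A(g\cdot\phi, g\cdot\psi)$ for all $g$, so $A$ is a $\C$-linear combination of the orbit indicators $B_1,\dots,B_M$, i.e. $A \in \mathfrak{B}$. This gives $C(\mathfrak{K}) \subseteq \mathfrak{B}$ and completes the argument.

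I expect the only real point needing care — rather than a genuine obstacle — is bookkeeping the left/right and inverse conventions: the matrices $\widehat{\alpha}$ encode the action $\phi \mapsto \alpha^{-1}\phi\alpha$ on the \emph{index set}, and when we write $\widehat{g} A \widehat{g}^{-1}$ acting on entries we must be consistent about whether $g$ acts on the left or right index, and about the identity $\widehat{g}^{-1} = \widehat{g}^\top$. Once the convention is pinned down, the equivalence ``$A$ commutes with all $\widehat{g}$'' $\iff$ ``$A$ is constant on orbits of the diagonal $\Iso_1(n)$-action on $X^2$'' is immediate, and the two inclusions close up. One should also note explicitly that $\dim C(\mathfrak{K}) = M = b_n$ by Proposition~\ref{orbit1}, which is consistent with $\dim\mathfrak{B} = b_n$ already recorded above and serves as a sanity check on the identification.
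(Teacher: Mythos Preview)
Your argument is correct: the commutant of a permutation representation is exactly the span of the indicator matrices of orbits on ordered pairs, and you have carefully verified that the generators $\widehat{\alpha}$ and $\widehat{\iota}$ realise precisely the permutation action of $\Iso_1(n)\cong\Sym(n)\times 2$ on $X=\Sym(n)$. The paper, for its part, gives no proof at all; it introduces the proposition only as ``rephrasing the earlier definition of orbits under $\Iso_1(n)$ in terms of matrix algebras'' and leaves the verification to the reader. So your write-up supplies exactly the standard details the paper suppresses, and your closing dimension check $\dim C(\mathfrak{K})=b_n$ is a nice sanity confirmation.
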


Since conjugacy classes of $\Sym(n)$ are left invariant by both conjugation
and inversion, the orbits under $\Iso_1(n)$ admit a classification according
to conjugacy classes.  In other words, each matrix $B_l$ is supported on
some block $C_i \times C_j$.  The sum of all such $B_l$ supported on a given
block is, when restricted to that block, the all-ones matrix $J$.

With notation from Section 2, we recall that the `restricted' Bose-Mesner
algebra $E_i' \mathfrak{A} E_j'$ has conjugation-invariant generators
supported on the $C_i \times C_j$ block.  So it follows that $\mathfrak{T}$
is a sub-$\C$-algebra of $\mathfrak{B}$.  The reverse inclusion seems
reasonable, but much more difficult to prove.  Indeed, the dimensions in
Table~\ref{orbits} grow much faster than the number of generators for
$\mathfrak{T}$.

\begin{conjecture}
\label{eq-algebras}
The Terwilliger algebra of the conjugacy scheme of $\Sym(n)$ has as a
basis the zero-one matrices $B_i$, $i=1,\dots,M$, defined via orbits of
$\Sym(n)^2$ under $\Iso_1(n)$.  That is, as $\C$-algebras, we have
$\mathfrak{T}=\mathfrak{B}$.
\end{conjecture}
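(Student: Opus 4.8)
We outline a possible line of attack.

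Since the inclusion $\mathfrak{T}\subseteq\mathfrak{B}$ is already in hand and $\dim\mathfrak{B}=b_n$, the plan is to prove equality by a dimension count, which I would carry out by passing to commutants. Both algebras are $\C^{*}$-subalgebras of $M_{n!}(\C)$ containing the identity: $\mathfrak{T}$ is generated by the symmetric matrices $A_i$ together with the diagonal matrices $E_j'$, and $\mathfrak{B}$ is the centralizer of $\mathfrak{K}$, which is itself $*$-closed because $\widehat{\alpha}^{*}=\widehat{\alpha^{-1}}$ and $\widehat{\iota}^{*}=\widehat{\iota}$. The finite-dimensional double commutant theorem then gives $\mathfrak{T}=\mathfrak{T}''$ and $\mathfrak{B}=\mathfrak{K}'$, and from $\mathfrak{T}\subseteq\mathfrak{B}$ one always has $\mathfrak{K}=\mathfrak{B}'\subseteq\mathfrak{T}'$. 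Hence Conjecture~\ref{eq-algebras} is equivalent to the single inclusion
\[
\mathfrak{T}'\ \subseteq\ \mathfrak{K},
\]
that is, to the statement that every matrix commuting with $\mathfrak{A}$ and with every $E_j'$ is a linear combination of the conjugation operators $\widehat{\alpha}$ and the operators $\widehat{\iota}\,\widehat{\alpha}$.

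The next step is to describe $\mathfrak{T}'=\mathfrak{A}'\cap\{E_j'\}'$ explicitly. Each $A_i$ acts on $\C[\Sym(n)]$ (functions on $\Sym(n)$) as left translation $f(x)\mapsto\sum_{c\in C_i}f(c^{-1}x)$ by the conjugacy class sum $\widehat{C_i}$, which is central and inversion-symmetric; thus $\mathfrak{A}$ is the image of the centre $Z(\C[\Sym(n)])$ under the left regular representation. Because the centre is precisely the set of elements whose left translation operator is also a right translation operator, a standard argument identifies $\mathfrak{A}'$ with the algebra $\mathfrak{M}$ of all two-sided translations $f(x)\mapsto f(g^{-1}xh)$. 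On the other hand $\{E_j'\}'$ is the space of matrices block-diagonal with respect to the partition of $\Sym(n)$ into conjugacy classes. Therefore
\[
\mathfrak{T}'=\bigl\{\,X\in\mathfrak{M}\ :\ X(\phi,\psi)=0\text{ whenever }\phi\text{ and }\psi\text{ are not conjugate}\,\bigr\}.
\]
Since $\widehat{\alpha}$ is two-sided translation by $(\alpha,\alpha)$, and $\widehat{\iota}$ commutes with each $A_i$ (as $A_i$ is simultaneously left and right translation by the inversion-symmetric $\widehat{C_i}$), both lie in $\mathfrak{M}$ and are supported on conjugate pairs; this re-proves $\mathfrak{K}\subseteq\mathfrak{T}'$ and isolates exactly what remains.

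The hard part will be showing that the conjugacy-class support condition forces membership in $\mathfrak{K}$. Using the two-sided module decomposition $\C[\Sym(n)]\cong\bigoplus_{\lambda}V_\lambda\otimes V_\lambda^{*}$, indexed by partitions $\lambda$ of $n$, one has $\mathfrak{M}\cong\bigoplus_\lambda{\rm End}(V_\lambda)\otimes{\rm End}(V_\lambda^{*})$, with $\mathfrak{K}$ the subalgebra generated by the ``diagonal'' elements $\bigoplus_\lambda\rho_\lambda(\alpha)\otimes(\rho_\lambda(\alpha)^{-1})^{*}$ together with the involution induced by $\widehat{\iota}$. The obstruction is that the projections $E_j'$ --- pointwise multiplication by the class indicators $\vj_{C_j}$ --- are not block operators for this decomposition: pointwise multiplication mixes the summands $V_\lambda\otimes V_\lambda^{*}$, whereas convolution respects them. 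So $\mathfrak{T}'=\mathfrak{M}\cap\langle E_j'\rangle'$ is an intersection of two algebras in general relative position, and controlling its dimension is precisely what is missing. Equivalently, after the block splittings $\mathfrak{T}=\bigoplus_{i,j}E_i'\mathfrak{T}E_j'$ and $\mathfrak{B}=\bigoplus_{i,j}E_i'\mathfrak{B}E_j'$, one must prove, for every ordered pair of conjugacy classes, that the span of the ``walk-count'' matrices
\[
E_i'A_{k_1}E_{l_1}'A_{k_2}\cdots A_{k_r}E_j',\qquad r\ge 0,
\]
whose $(\phi,\psi)$-entry counts chains $\phi=\rho_0,\rho_1,\dots,\rho_r=\psi$ with $\rho_t\in C_{l_t}$ and $\rho_{t-1}\rho_t^{-1}\in C_{k_t}$, has dimension equal to the number of $\Iso_1(n)$-orbits contained in $C_i\times C_j$.

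For each fixed $n\le 7$ this last equality of dimensions is a finite linear-algebra verification, which can be run alongside the semidefinite computations of this paper and which I would check first as evidence. A uniform proof would require exhibiting, block by block, an explicit spanning family of walk-count matrices of exactly the right cardinality, and I expect this to be the genuine difficulty: unlike in the Hamming or Johnson schemes, where the relevant orbits are governed by boundedly many numerical invariants, here an $\Iso_1(n)$-orbit of a pair records the action of $\langle\phi,\psi\rangle$ on $[n]$ up to isomorphism and the inversion twist, the total number of such orbits grows super-polynomially in $n$, and walks of unbounded length are genuinely required. The two most promising routes seem to be an induction on $n$ driven by the branching of conjugacy classes from $\Sym(n)$ to $\Sym(n-1)$, and a direct generating-function identity equating $\dim E_i'\mathfrak{T}E_j'$ with the orbit count.
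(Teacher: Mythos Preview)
The statement in question is stated in the paper as a \emph{conjecture}, not a theorem: the paper gives no proof, remarking only that it has been verified computationally for $n\le 5$ in \cite{BO} and extended by the authors to $n=6$. So there is no ``paper's own proof'' to compare against.

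Your reduction via the double commutant theorem is sound. Both $\mathfrak{T}$ and $\mathfrak{K}$ are unital $*$-subalgebras of $M_{n!}(\C)$, so $\mathfrak{T}=\mathfrak{T}''$ and $\mathfrak{K}=\mathfrak{K}''=\mathfrak{B}'$; hence $\mathfrak{T}=\mathfrak{B}$ is indeed equivalent to the single inclusion $\mathfrak{T}'\subseteq\mathfrak{K}$. Your identification of $\mathfrak{A}'$ with the algebra $\mathfrak{M}$ of two-sided translations is correct (both equal $\bigoplus_\lambda \mathrm{End}(V_\lambda\otimes V_\lambda^*)$ in the Peter--Weyl decomposition of $\C[\Sym(n)]$), and the description $\mathfrak{T}'=\mathfrak{M}\cap\{E_j'\}'$ follows. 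This is a clean and useful reformulation that the paper does not give.

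That said, you have not proved the conjecture, and you say so yourself: the step ``the conjugacy-class support condition forces membership in $\mathfrak{K}$'' is exactly the content of the conjecture after your reduction, and you offer only heuristics (branching induction, generating-function identity) rather than an argument. Your honest assessment that the $E_j'$ do not respect the $V_\lambda\otimes V_\lambda^*$ decomposition, so that $\mathfrak{M}$ and $\{E_j'\}'$ sit in general position, pinpoints precisely why the problem is hard and why no analogue of the Schrijver/Gijswijt block-diagonalization for Hamming or Johnson schemes is known here. In short: your framework is a legitimate reformulation that could guide future work, but it does not advance beyond where the paper already stands, and the statement remains open.
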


Conjecture~\ref{eq-algebras} was verified up to $n = 5$ in \cite{BO}.  We
have extended this verification to $n=6$ but leave the general problem to
future work.

Fortunately, the center of $\mathfrak{B}$ is reasonably easy to describe.  The following appears in \cite{NS} for general groups; here (and from now on) we are interested in $G=\Iso_1(n) \cong \Sym(n) \times 2$.

\begin{prop}
The primitive central idempotents of $\mathfrak{B}$ are given by
\begin{equation}
\label{epsilon}
\epsilon_k = \frac{\chi_k(\id)}{2n!} \sum_{\gamma \in G} \chi_k(\gamma)
\widehat{\gamma},
\end{equation}
where $k$ indexes the irreducible characters of $G$.
\end{prop}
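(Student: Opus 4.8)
The plan is to identify $\mathfrak{B}$ with the image of the group algebra $\C[G]$ under the representation $\gamma \mapsto \widehat{\gamma}$, and then transport the classical Wedderburn structure of $\C[G]$ through this identification. First I would observe that the map $\rho\colon \C[G] \to M_{n!}(\C)$ sending $\gamma \mapsto \widehat{\gamma}$ and extending linearly is an algebra homomorphism: this amounts to checking $\widehat{\gamma_1}\widehat{\gamma_2} = \widehat{\gamma_1 \gamma_2}$ for the two types of generators (conjugations $\widehat{\alpha}$ and the inversion $\widehat{\iota}$), which is the routine verification that $\gamma \mapsto \widehat{\gamma}$ is a permutation representation of $G = \cC \times \cI$ acting on $\Sym(n)$. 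Hence $\mathfrak{K} = \rho(\C[G])$. The key point is that $\mathfrak{B}$, being the centralizer of $\mathfrak{K}$, is a $\C^*$-algebra whose center coincides with the center of $\mathfrak{K}$ (the center of an algebra and the center of its centralizer agree, for semisimple algebras acting on a finite-dimensional space); so the primitive central idempotents of $\mathfrak{B}$ are exactly the primitive central idempotents of $\mathfrak{K}$.

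Next I would recall the standard formula for the primitive central idempotents of a group algebra: for a finite group $G$ with irreducible characters $\{\chi_k\}$, the element
$$
e_k = \frac{\chi_k(\id)}{|G|} \sum_{\gamma \in G} \overline{\chi_k(\gamma)}\,\gamma \in \C[G]
$$
is the primitive central idempotent projecting onto the $\chi_k$-isotypic component of the regular representation. Applying $\rho$ gives $\epsilon_k := \rho(e_k) = \frac{\chi_k(\id)}{|G|} \sum_{\gamma \in G} \overline{\chi_k(\gamma)}\,\widehat{\gamma}$. Since $G \cong \Sym(n) \times 2$ has all real (indeed rational) character values, the conjugate bar is superfluous and $|G| = 2n!$, which yields precisely the stated expression \eqref{epsilon}. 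The only thing left is to check that $\rho$ does not collapse any $e_k$ to zero and does not merge two of them — equivalently, that every irreducible character of $G$ actually occurs in the permutation representation $\widehat{\cdot}$ on $\Sym(n)$. This follows because the conjugacy action of $\Sym(n)$ on itself contains the regular representation of $\Sym(n)$ as a constituent (the stabilizer of a regular element is trivial... more carefully, one uses that the permutation character of conjugation is $\sum_\chi |C(\phi_i)|^{-1}(\dots)$, or simply that $\C[G]$ acting on $\Sym(n)$ with the two commuting actions realizes each simple factor), so $\rho$ is injective on the center $Z(\C[G])$ and the $\epsilon_k$ remain primitive and central in $\mathfrak{K}$, hence in $\mathfrak{B}$.

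The main obstacle, and the step I would spend the most care on, is the claim that $Z(\mathfrak{B}) = Z(\mathfrak{K})$ together with the nondegeneracy of $\rho$ on the center — i.e. making rigorous that the full list of $\epsilon_k$ (one per irreducible character of $G$) survives and exhausts the primitive central idempotents of $\mathfrak{B}$, rather than some characters being absent from the permutation module. The clean way around this is to invoke the double-centralizer theorem: $\mathfrak{K}$ and $\mathfrak{B}$ are mutual centralizers inside $M_{n!}(\C)$, both are semisimple, and therefore there is a bijection between their simple components matching centers; combined with the explicit Wedderburn idempotents of $\C[G]$ and the fact that $\rho$ is faithful on $Z(\C[G])$ (which one checks from $\dim \mathfrak{K} \ge$ number of $G$-characters, using that the permutation representation on $\Sym(n)$ is faithful enough — concretely, the character sums $\sum_\chi \chi(\gamma)$ appearing in Proposition~\ref{orbit1} being nonzero for $\gamma = \id$ forces all isotypic components to appear), one concludes that the $\epsilon_k$ are precisely the primitive central idempotents of $\mathfrak{B}$. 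I would present this via the general result of \cite{NS} specialized to $G = \Iso_1(n)$, filling in only the identification $\mathfrak{B} = \operatorname{End}_G(\C[\Sym(n)])$ and the character-value reality that removes the complex conjugate.
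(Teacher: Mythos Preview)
The paper does not prove this proposition; it cites \cite{NS} for the general fact that isotypic projectors give the primitive central idempotents of the centralizer algebra of a permutation module, and your route through the Wedderburn idempotents of $\C[G]$ together with the double-centralizer theorem is precisely that standard argument. But there is a genuine error at the step you yourself flag as the main obstacle: you claim that every irreducible character of $G=\Iso_1(n)\cong\Sym(n)\times 2$ occurs in the permutation module $\C[\Sym(n)]$, so that no $\epsilon_k$ vanishes. This is false. Table~\ref{Mj} in the paper lists only the \emph{non-zero} multiplicities $m_k$, and for $n=4$ there are just five of them against ten irreducible characters of $\Sym(4)\times 2$. (Concretely, on a conjugacy class $C_l$ consisting of involutions, inversion acts trivially, so no character of the form $\chi_\lambda\otimes\mathrm{sgn}$ can appear in $\chi|_{C_l}$.) Your supporting heuristics are also incorrect: an $n$-cycle has centralizer of order $n$, not $1$, so the conjugation action contains no regular orbit; and evaluating the permutation character at $\gamma=\id$ gives only $\sum_k m_k d_k=n!$, which in no way forces each $m_k>0$.

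The fix is simply to drop the faulty injectivity claim. The double-centralizer theorem already matches the simple summands of $\mathfrak{K}=\rho(\C[G])$ with those of $\mathfrak{B}$, and the former correspond exactly to the $k$ with $\epsilon_k\neq 0$. Thus the correct conclusion---and the correct reading of the proposition---is that formula~(\ref{epsilon}) produces the primitive central idempotents of $\mathfrak{B}$ for those $k$ with $m_k>0$, and yields zero for the remaining $k$.
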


Since the $\epsilon_k$ commute, they have a common basis of eigenvectors.
Consider a unitary matrix $U \in M_{n!}(\C)$ which has orthonormal
eigenvectors of the $\epsilon_k$, organized by columns.  Then $U
\mathfrak{B} U^\top$ is block-diagonalized by simple blocks.  Our goal
later, in Section 5, is to decompose these simple blocks into basic blocks.
For now, though, we have sufficient background to introduce
semidefinite programming for permutation codes.

\Section{Semidefinite programming}

Let $\Gamma$ be a subset of $\Sym(n)$, and recall that $\vj_\Gamma$ is the
$n!\times 1$ zero-one indicator vector of $\Gamma$.


Define now the subgroup $\Pi$ of $\Iso(n)$ consisting
of all isometries $t$ such that $\id \in t(\Gamma)$.  Let $\Pi'$ denote the
complement of this subgroup, consisting of all $t' \in \Iso(n)$ with $\id
\not\in t'(\Gamma)$.  It is clear that $|\Pi|=2n!|\Gamma|$, while
$|\Pi'|=2n!(n!-|\Gamma|)$.
This defines two matrices
 $$R_\Gamma =\frac{1}{|\Pi|} \sum_{t \in \Pi} \vj_{t(\Gamma)} \cdot
\vj^\top_{t(\Gamma)}~~\text{and}~~
R'_\Gamma =\frac{1}{|\Pi'|} \sum_{t' \in \Pi'} \vj_{t'(\Gamma)} \cdot
\vj^\top_{t'(\Gamma)}$$
which are roughly analogous to the inner distribution vector $\mathbf{a}$ of
Section 2.

By construction, both $R_\Gamma$ and $R'_\Gamma$ are
positive semidefinite (symmetric) matrices.  And the trace of $R_\Gamma$
gives the cardinality of $\Gamma$. The $(\id,\id)$-entry of $R_\Gamma$ is
always $1$, since $\id$ appears in all copies under $\Pi$. The the
$(\id,\phi)$-entry equals the $(\phi,\phi)$-entry in $R_\Gamma$, and,
$R_\Gamma$ being symmetric, it is also equal to the $(\phi, \id)$-entry.
Since $\Iso(n)$ is transitive on $\Sym(n)$,
one has the following formula and lemma.
\begin{equation}
\label{Rmat}
R_\Gamma(\phi,\psi)+\frac{n!-|\Gamma|}{|\Gamma|}R_\Gamma' (\phi,\psi)=
R_\Gamma(\phi\psi^{-1},\phi\psi^{-1})=R_\Gamma(\id,\phi\psi^{-1}).
\end{equation}
(In particular, the matrix $R_\Gamma+\frac{n!-|\Gamma|}{ |\Gamma|}R_\Gamma'$
has a diagonal of $1$.)
\begin{lemma}
Consider an entry $(\phi,\psi) \in \Sym(n)^2$.
If no quotient in $\Gamma$ belongs to the same conjugacy class as $\phi
\psi^{-1}$, then $R_\Gamma(\phi,\psi) = R'_\Gamma(\phi,\psi) = 0$.
\end{lemma}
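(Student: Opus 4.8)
The plan is to show directly that every term defining $R_\Gamma(\phi,\psi)$, and every term defining $R'_\Gamma(\phi,\psi)$, is forced to vanish under the hypothesis. Since these terms are entries of rank-one matrices $\vj_{t(\Gamma)}\vj_{t(\Gamma)}^\top$ built from zero-one factors, they are nonnegative; hence no cancellation can occur, and it is enough to kill each one individually.

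First I would unwind the averages, writing $R_\Gamma(\phi,\psi) = \frac{1}{|\Pi|}\,|\{t\in\Pi : \phi\in t(\Gamma)\ \text{and}\ \psi\in t(\Gamma)\}|$, and similarly for $R'_\Gamma$ over $\Pi'$. Suppose this count is nonzero; then some isometry $t$ satisfies $\phi = t(\gamma_1)$ and $\psi = t(\gamma_2)$ with $\gamma_1,\gamma_2\in\Gamma$. Using the structural identification $\Iso(n) = (\cL\times\cR)\rtimes\cI$, every such $t$ has the form $\sigma\mapsto a\sigma b$ or $\sigma\mapsto a\sigma^{-1}b$ for fixed $a,b\in\Sym(n)$. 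In the first case $\phi\psi^{-1} = a(\gamma_1\gamma_2^{-1})a^{-1}$; in the second $\phi\psi^{-1} = a(\gamma_1^{-1}\gamma_2)a^{-1}$, which is conjugate to $\gamma_2\gamma_1^{-1} = (\gamma_1\gamma_2^{-1})^{-1}$. Either way $\phi\psi^{-1}$ lies in the conjugacy class of $\gamma_1\gamma_2^{-1}$ — in the second case because a permutation of $\Sym(n)$ is always conjugate to its inverse — that is, in the class of a quotient of two elements of $\Gamma$, contradicting the hypothesis. (If $\phi\ne\psi$ this is genuinely a quotient of two distinct elements, since $t$ is a bijection; if $\phi=\psi$ there is nothing to prove, as $R_\Gamma(\phi,\phi)=1$ and the hypothesis already fails on the trivial quotient.) Hence $R_\Gamma(\phi,\psi)=0$, and running the identical argument over $\Pi'$ gives $R'_\Gamma(\phi,\psi)=0$.

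I do not expect a substantial obstacle; the only point requiring care is the inversion generator $\cI$, where the quotient produced is the inverse of $\gamma_1\gamma_2^{-1}$ rather than literally a conjugate of it, so one must invoke that conjugacy classes of $\Sym(n)$ are closed under inversion (equivalently, determined by cycle type), as noted in Section~\ref{intro}. As an alternative route one could instead apply the same reasoning to the pair $(\id,\phi\psi^{-1})$ — noting that $t\in\Pi$ already forces $\id\in t(\Gamma)$ — to obtain $R_\Gamma(\id,\phi\psi^{-1})=0$, and then read off both desired equalities from \eqref{Rmat} using nonnegativity of the entries of $R_\Gamma,R'_\Gamma$ together with the positivity of the coefficient $\tfrac{n!-|\Gamma|}{|\Gamma|}$; but the direct argument above treats $R_\Gamma$ and $R'_\Gamma$ on equal footing and avoids the degenerate case $|\Gamma|=n!$.
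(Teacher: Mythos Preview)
Your argument is correct: unwinding the definitions of $R_\Gamma$ and $R'_\Gamma$, observing that each isometry of $\Sym(n)$ has the form $\sigma\mapsto a\sigma b$ or $\sigma\mapsto a\sigma^{-1}b$, and computing $\phi\psi^{-1}$ in each case does exactly what is needed. The paper itself states this lemma without proof, presumably regarding it as immediate from the definitions and from equation~\eqref{Rmat}; your write-up makes explicit precisely the verification the paper omits, and your alternative route via~\eqref{Rmat} is indeed the shortcut the paper seems to have in mind.

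One small inaccuracy worth cleaning up: in your parenthetical on the diagonal case you write $R_\Gamma(\phi,\phi)=1$, but only the $(\id,\id)$-entry of $R_\Gamma$ is guaranteed to equal $1$; in general $R_\Gamma(\phi,\phi)=R_\Gamma(\id,\phi)$ by the discussion preceding~\eqref{Rmat}, and indeed $\trace(R_\Gamma)=|\Gamma|$, not $n!$. This does not affect your proof, since your real point there---that the hypothesis is vacuous when $\phi=\psi$ because the trivial quotient $\gamma\gamma^{-1}=\id$ lies in the class of $\phi\psi^{-1}=\id$---is the correct one and stands on its own.
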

Next, we have the key connection with the centralizer algebra $\mathfrak{B}$ discussed in
Section 3.
\begin{prop}
The two matrices $R_\Gamma$ and $R'_\Gamma$ belong to the algebra
$\mathfrak{B}$.
\end{prop}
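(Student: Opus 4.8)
The plan is to work with the characterization $\mathfrak{B}=\mathrm{End}_G(X)$ for $X=\Sym(n)$ and $G=\Iso_1(n)$ recorded in Section~3. Concretely, write $P_t\in M_{n!}(\C)$ for the permutation matrix of an isometry $t$, so that $P_t\vj_\Gamma=\vj_{t(\Gamma)}$, $P_uP_v=P_{uv}$, and $P_t^\top=P_{t^{-1}}$. Under this convention the generators $\widehat{\alpha},\widehat{\iota}$ of $\mathfrak{K}$ are exactly the $P_s$ with $s$ ranging over the conjugations and inversion generating $\Iso_1(n)$, and $\mathfrak{B}$, being the centralizer of $\mathfrak{K}$, consists precisely of the matrices $M$ with $P_sMP_s^\top=M$ for every $s\in\Iso_1(n)$. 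So it suffices to verify this single invariance for $R_\Gamma$ and for $R'_\Gamma$.

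The one structural fact I would isolate first is that $\Iso_1(n)$ is the stabilizer of $\id$ in $\Iso(n)$. Hence, for $s\in\Iso_1(n)$ and $t\in\Pi$ (meaning $\id\in t(\Gamma)$) we get $\id=s(\id)\in(st)(\Gamma)$, so $st\in\Pi$; since $s^{-1}\in\Iso_1(n)$ as well, $t\mapsto st$ is a bijection of $\Pi$ onto itself. The identical argument, with ``$\id\in$'' replaced by ``$\id\notin$'', shows $t'\mapsto st'$ is a bijection of $\Pi'$. In other words $\Pi$ and $\Pi'$ are each unions of right cosets of $\Iso_1(n)$, which is the only place the definition of these index sets enters.

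The verification is then a one-line reindexing: for $s\in\Iso_1(n)$,
\begin{align*}
P_sR_\Gamma P_s^\top
&=\frac{1}{|\Pi|}\sum_{t\in\Pi}P_sP_t\,\vj_\Gamma\vj_\Gamma^\top\,P_t^\top P_s^\top
=\frac{1}{|\Pi|}\sum_{t\in\Pi}\vj_{(st)(\Gamma)}\vj_{(st)(\Gamma)}^\top\\
&=\frac{1}{|\Pi|}\sum_{u\in\Pi}\vj_{u(\Gamma)}\vj_{u(\Gamma)}^\top=R_\Gamma,
\end{align*}
using $P_sP_t=P_{st}$ and $P_t^\top P_s^\top=P_{st}^\top$ in the first equality and the bijection $t\mapsto u=st$ of $\Pi$ in the third. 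Since $P_s$ is orthogonal this says $R_\Gamma$ commutes with every $P_s$, $s\in\Iso_1(n)$, i.e. $R_\Gamma\in\mathfrak{B}$; running the same computation over $\Pi'$ gives $R'_\Gamma\in\mathfrak{B}$.

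There is no genuinely hard step here: the only thing needing care is matching conventions, namely that the $\Iso_1(n)$-action on $\Sym(n)$ whose centralizer algebra is $\mathfrak{B}$ is indeed the one generated by the matrices $\widehat{\alpha}$ and $\widehat{\iota}$, and that conjugating $R_\Gamma$ by $P_s$ is precisely left-translation of the averaging index $t$ by $s$. If one prefers a more invariant phrasing, the same content is obtained from the Reynolds operator $M\mapsto\frac{1}{|\Iso_1(n)|}\sum_{s\in\Iso_1(n)}P_sMP_s^\top$, whose image is exactly $\mathfrak{B}$: splitting $\Pi$ into right cosets $\Iso_1(n)t$ exhibits $R_\Gamma$ as a nonnegative combination of Reynolds-averages of the rank-one matrices $\vj_{t(\Gamma)}\vj_{t(\Gamma)}^\top$, hence as an element of $\mathfrak{B}$, and likewise for $R'_\Gamma$.
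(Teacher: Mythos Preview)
Your proof is correct, but it takes a genuinely different route from the paper's. You exploit the description of $\mathfrak{B}$ as the centralizer algebra of $\mathfrak{K}$ (Proposition~3.4 / the remark $\mathfrak{B}=\mathrm{End}_G(X)$) and simply check $\Iso_1(n)$-invariance of $R_\Gamma$ and $R'_\Gamma$ by the reindexing bijection $t\mapsto st$ on $\Pi$ and $\Pi'$. The paper instead works on the basis side: it writes down an explicit expansion $R_\Gamma=B_1+\sum_l a_l B_l$ with the coefficients
\[
a_l=\frac{|\{(\theta,\mu,\nu)\in\Gamma^3:(\theta^{-1}\mu,\theta^{-1}\nu)\in O_l\}|}{|\Gamma|\,|O_l|},
\]
and then uses the identity (\ref{Rmat}) to produce an analogous expansion for $R'_\Gamma$ in terms of auxiliary matrices $B'_l$ built from the $B_l$. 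Your argument is shorter and more conceptual for the bare statement of the proposition. The paper's argument does additional work: those very coefficients $a_l$ and the matrices $B'_l$ are exactly what appear in Proposition~4.3 and in the SDP (\ref{sdp-statement}), so the explicit decomposition is not incidental but is the content actually needed downstream. If you adopt your approach in writing up, you would still have to supply these formulas separately before stating the SDP.
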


\begin{proof}
Recall that $\{B_l\}$ are the zero-one matrices representing the orbits
$\{O_l\}$ of $\Sym(n)^2$ under the action of $\Iso_1(n)$.  With the
convention that $O_1=\{(\id,\id)\}$, one has $R_\Gamma = B_1+\sum_l a_l
B_l$,
where the coefficients $a_l$ are explicitly determined by the formula
$$a_l=\frac{|\{ (\theta, \mu, \nu)\in \Gamma \times  \Gamma \times \Gamma :
(\theta^{-1}\mu, \theta^{-1}\nu) \in O_l  \}|}{|\Gamma|\ |O_l|}.$$
Observe that $0 \le a_l \le 1$.

Let $I_1$ denote the subset of indices $l\neq 1$ such that $O_l$ contains
elements of type $(\id,\phi)$, $ (\phi, \id)$ or $(\phi, \phi)$, and let
$I_2$ be the complementary set $\{2\dots M\} \backslash I_1$.

With some rearranging and (\ref{Rmat}), it follows easily that
$$\frac{n!- |\Gamma|}{|\Gamma|}R'_\Gamma = (I-B_1)+ \sum_l a_l B'_l,$$ where
the matrices $B'_l$ are defined by $B'_l=-B_l$
if $l \in I_2$ and by the following expression if $l \in I_1$:
$$B'_l(\phi,\psi)=\left\{\begin{array}{l l} 1 & \mbox{ if } (\phi\psi^{-1},
\id)\in O_l,\\
-1 & \mbox{ if } \phi=\psi\mbox{ and }(\phi , \id)\in O_l,\\
0 & \mbox{ otherwise.}\end{array}\right.$$
In both cases, we have the required matrices expressed as linear
combinations of the $\{B_l\}$.
\end{proof}

We now summarize the above facts concerning $R_\Gamma$ and $R'_\Gamma$.

\begin{prop}
Let $D \subseteq [0,n]$.  For any $(n,D)$-permutation code $\Gamma \subseteq
\Sym(n)$, we have
$$\left\{ \begin{array}{l l}
|\Gamma|= \trace(R_\Gamma) & \\
R_\Gamma=B_1+\sum_i a_i B_i  \succeq 0\\
R'_\Gamma =I-B_1+\sum_j a_j B'_j\succeq 0\\
R_\Gamma(\id, \id)=1& \\
R'_\Gamma(\id, \id)=0&\\
R_\Gamma(\phi, \psi)=R'_\Gamma(\phi, \psi) = 0 & \text{if~}d_H(\phi, \psi)
\not\in D\\
\end{array}\right.$$
\end{prop}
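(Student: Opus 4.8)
The plan is to assemble the final Proposition as a routine synthesis of the lemmas and propositions that immediately precede it; almost all the content has already been established, so the work is one of bookkeeping rather than new argument. First I would fix an $(n,D)$-permutation code $\Gamma \subseteq \Sym(n)$ and recall the construction of $R_\Gamma$ and $R'_\Gamma$ as averages of rank-one projections $\vj_{t(\Gamma)}\vj_{t(\Gamma)}^\top$ over the isometry subsets $\Pi$ and $\Pi'$. From this definition, positive semidefiniteness of both matrices is immediate, since each is a nonnegative combination of positive semidefinite matrices; this gives the two constraints $R_\Gamma \succeq 0$ and $R'_\Gamma \succeq 0$. The trace identity $|\Gamma| = \trace(R_\Gamma)$ was already observed when $R_\Gamma$ was introduced (each $\vj_{t(\Gamma)}\vj_{t(\Gamma)}^\top$ has trace $|\Gamma|$, and we are averaging), so I would simply cite that.

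Next I would collect the diagonal/support information. The $(\id,\id)$-entry of $R_\Gamma$ equals $1$ because $\id$ lies in $t(\Gamma)$ for every $t \in \Pi$, by definition of $\Pi$; symmetrically, $\id \notin t'(\Gamma)$ for every $t' \in \Pi'$ forces $R'_\Gamma(\id,\id)=0$. The vanishing condition $R_\Gamma(\phi,\psi)=R'_\Gamma(\phi,\psi)=0$ whenever $d_H(\phi,\psi)\notin D$ is precisely the Lemma stated just above (with $D \subseteq [0,n]$ interpreted so that $\phi\psi^{-1}$ being in a forbidden conjugacy class means no quotient from $\Gamma$ can match it — here one uses that $\Gamma$ is an $(n,D)$-code, so all quotients $\theta\mu^{-1}$ with $\theta,\mu \in t(\Gamma)$ distinct have Hamming distance in $D$). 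So this line is a direct application of that Lemma.

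The remaining two lines, $R_\Gamma = B_1 + \sum_i a_i B_i$ and $R'_\Gamma = I - B_1 + \sum_j a_j B'_j$, are exactly the conclusion of the preceding Proposition identifying $R_\Gamma, R'_\Gamma \in \mathfrak{B}$ together with the explicit expansions derived in its proof — the coefficients $a_l$ and the auxiliary matrices $B'_l$ are defined there. Combining the scalar $\tfrac{n!-|\Gamma|}{|\Gamma|}$ normalization in that proof with the displayed form here is a one-line rescaling, which I would note explicitly so the reader sees the $B'_l$ absorb that factor. Thus the proof is: invoke the definition for the first three items, invoke the support Lemma for the last, and invoke the membership-in-$\mathfrak{B}$ Proposition (and its proof) for the two expansion identities.

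I do not anticipate a genuine obstacle here; the only mild care needed is to state clearly that each item is a quotation of something already proved and to make sure the normalization constant between the earlier display $\tfrac{n!-|\Gamma|}{|\Gamma|}R'_\Gamma = (I-B_1) + \sum_l a_l B'_l$ and the current display $R'_\Gamma = I - B_1 + \sum_j a_j B'_j$ is handled transparently — presumably by redefining the $B'_l$ (or the $a_j$) to include that scalar, which I would flag rather than silently rescale. If anything, the subtlety worth a sentence is why the hypothesis ``$\Gamma$ is an $(n,D)$-code'' is exactly what feeds the support Lemma: distinct elements of any translate $t(\Gamma)$ still have their quotient in a class indexed by $D$, since left/right multiplication and inversion preserve conjugacy classes.
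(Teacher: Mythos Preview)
Your proposal is correct and matches the paper's approach exactly: the paper gives no separate proof for this proposition, introducing it with ``We now summarize the above facts concerning $R_\Gamma$ and $R'_\Gamma$,'' so the statement is indeed just a bookkeeping collection of the preceding observations, lemma, and proposition. Your flag about the normalization discrepancy between $\tfrac{n!-|\Gamma|}{|\Gamma|}R'_\Gamma = (I-B_1)+\sum_l a_l B'_l$ and the displayed $R'_\Gamma = I-B_1+\sum_j a_j B'_j$ is well taken---the paper silently absorbs that scalar, and making this explicit (as you suggest) would be an improvement over the original.
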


From this, the following SDP system is obtained, considering the
coefficients $a_i$ as variables in $[0,1]$:

\begin{equation}
\label{sdp-statement}
\begin{array}{lll}
\mbox{maximize:} & \trace(R_1)  \\
\mbox{subject to:} & R_1=B_1+\sum_i a_i B_i  \succeq 0\\
&R_2 =I-B_1+\sum_i a_i B'_i\succeq 0, & 0\le  a_i \le 1\\
&R_1(\id, \id)=1,& \\
&R_2(\id, \id)=0,&\\
&R_1(\phi, \psi)=R_2(\phi, \psi) = 0 & \text{if~}d_H(\phi, \psi) \not\in
D.\\
\end{array}
\end{equation}

Given $n$ and $D$, let $M_{SDP}$ denote the maximum value of this program.
Then
$$M(n,D) \le M_{SDP}.$$

Recall that the matrices $B_i$ have size $n!\times n!$.  So presently, we
can only directly consider (\ref{sdp-statement}) for $n\le 6$. Since permutation
codes are well understood for $n \le 5$, our main contribution at this stage
is a table of SDP bounds for $n=6$.
For $n \ge 7$, it is necessary to consider an equivalent SDP obtained via
block-diagonalization of the algebra $\mathfrak{B}$.  That is the topic of
the next section.

We conclude with Table~\ref{sdp6}, indicating the SDP bounds from (\ref{sdp-statement}) for $n=6$ and
various distance sets $D$. For comparison, the Delsarte LP bounds $M_{LP}$ are given, along with the quantity $\Pi_{d\in D} d$, which is 
an upper bound either for $M(n,D)$ or $M(n,D^c)$ (see \cite{T}).

\begin{table}[h]
\normalsize
$$\begin{array}{|l| c |c | c|}
\hline
D& M_{SDP} & M_{LP} & \Pi_{d \in D} d\\
\hline
\emptyset & 1& 1  & 1\\
\hline
\{ 6\} & 6& 6  & 6\\
\hline
\{ 5\} & 12& 15  & 5\\
\hline
\{ 4\} & 7& 12  & 4\\
\hline
\{ 3\} & 5& 6  & 3\\
\hline
\{ 2\} & 2& 2  & 2\\
\hline
\{ 5,6\} & 25& 30  & 30\\
\hline
\{ 4,6\} & 35& 48  & 24\\
\hline
\{ 3,6\} & 18& 24  & 18\\
\hline
\{ 2,6\} & 9& 12  & 12\\
\hline
\{ 4,5\} & 20& 20  & 20\\
\hline
\{ 3,5\} & 14& 15  & 15\\
\hline
\{ 2,5\} & 12& 15  & 10\\
\hline
\{ 3,4\} & 15& 24  & 12\\
\hline
\{ 2 ,4\} & 8& 12  & 8\\
\hline
\{ 2,3\} & 6& 6  & 6\\
\hline
\end{array}
~~
\begin{array}{|l| c |c | c|}
\hline
D& M_{SDP} & M_{LP} & \Pi_{d \in D} d\\
\hline
\{ 4,5,6\} & 120& 120  & 120\\
\hline
\{ 3,5,6\} & 56 & 60  & 90\\
\hline
\{ 2,5,6\} & 27& 30  & 60\\
\hline
\{ 3,4,6\} & 39& 48  & 72\\
\hline
\{ 2,4,6\} & 48& 48  & 48\\
\hline
\{ 2,3,6\} & 20& 36  & 36\\
\hline

\{ 3,4,5\} & 60& 60  & 60\\
\hline
\{ 2,4,5\} & 24& 30  & 80\\
\hline
\{ 2,3,5\} & 14& 15  & 30\\
\hline
\{ 2, 3 ,4\} & 24& 24  & 24\\
\hline

\{ 3,4,5,6\} & 360& 360  & 360\\
\hline
\{ 2,4,5,6\} & 120 & 120  & 240\\
\hline
\{ 2,3,5,6\} & 56& 60  &180\\
\hline
\{ 2,3,4,6\} & 48& 48  & 144\\
\hline
\{ 2,3,4,5\} & 120& 120  & 120\\
\hline
\{ 2,3,4,5,6\} & 720& 720  & 720\\
\hline
\end{array}$$
\caption{LP and SDP bounds for $\Sym(6)$}
\label{sdp6}
\end{table}

\Section{Block diagonalization and bounds for $\Sym(7)$}

First, let us recall the notion of  $*$-isomorphism. A bounded linear map
$\kappa :
\mathcal{A} \mapsto \mathcal{B}$ such that for all $X,Y \in
\mathcal{A}$, $\kappa(XY) =\kappa(X)\kappa(Y)$ and
$\kappa(X^*)=\kappa(X)^*$ is called a $*$-\emph{homomorphism} of
algebras. Such a map which is also bijective is naturally called a $\C^*$-\emph{isomorphism} and the two
algebras $\mathcal{A}, \mathcal{B}$ are $*$-\emph{isomorphic}. A $*$-isomorphism
of matrix algebras preserves positive semidefiniteness. 

Next, we follow Gijswijt in \cite{Gij} and Vallentin in \cite{V} by considering the $*$-isomorphism which `block-diagonalizes' our algebra $\mathfrak{B}$. 

In more detail, we can regard $\mathfrak{K}$ as a semisimple $G$-module, $G=\Iso_1(n)$, acted upon
by $\mathfrak{B}$.  Therefore, $\mathfrak{K}$ decomposes as a sum of orthogonal
submodules $W_k$, where $W_k$ has columns forming a basis of the range
of the matrix $\epsilon_k$ defined in (\ref{epsilon}), for $0\le k < 2m$.
Each $W_k$ is a direct sum of, say, $m_k$ submodules: $W_k=V_k ^1\oplus \dots \oplus V_k ^{m_k}$, with $V_k^i\cong V_k ^j$ for all $i,j$. Let $d_k$ denote the common dimension of $V_k ^i$.  Incidentally, this parameter is also given by $d_k=\chi_k (\id)$ where $\chi_k$ is the $k$-th irreducible character of $G$.

For the centralizer algebra $\mathfrak{B}$ of $\mathfrak{K}$, we correspondingly have a decomposition of $\mathfrak{B}$ as an orthogonal direct sum of, say, $W'_k$, each of which is in turn decomposes into $d_k$ submodules of dimension $m_k$.

The action of elements in $\mathfrak{K}$ can be naturally decomposed along each orbit (i.e.~conjugacy class). Let $\chi\vert_{C_l}$ denote the restriction of the character $\chi$ on the conjugacy class $C_l$.  So $\chi\vert_{C_l} (\gamma)$ is the number of permutations $ \alpha \in C_l$ fixed by $\gamma \in \Iso_1 (n)$. Depending on the conjugacy class of $\gamma$, it can be written $\gamma (\alpha)= \beta \alpha \beta^{-1}$ or $\gamma (\alpha)= \beta \alpha^{-1} \beta^{-1}$ for all $\alpha\in \Sym(n)$. In the first case, $\chi\vert_{C_l} (\gamma) $ is the number of permutations in $C_l$ commuting with a fixed permutation $\beta\in C_j$. In the latter case, $\chi\vert_{C_l} (\gamma) $ is the number of permutations $\alpha$  in $C_l$ such that, for a fixed $\beta \in C_j$, $(\alpha \beta)^2= \beta^2$.  So far, to the best of our knowledge, no closed form formula is known for these two expressions.

For each $l$, $\chi\vert_{C_l}$ splits as a sum of irreducible characters $\chi_k$, $0\le k < 2m$, of $\Iso_1(n)$ as
$$\chi\vert_{C_l} = \sum_{k=0}^{2m-1} a_k^l \chi_k.$$ 
The coefficients are given by 
$$a_k^l=\langle \chi\vert_{C_l} , \chi_k \rangle =\frac{1}{2 n!} \sum_{\gamma \in G} \chi\vert_{C_l} (\gamma)  \chi_k (\gamma).$$ 
So each irreducible representation of $G=\Iso_1(n)$ appears $m_k = \sum_{l} a_k^l $ times  in the action of $\mathfrak{K}$ and the dimension $b_n$ of $\mathfrak{B}$ is equal to $\sum_k m_k^2$. The non-zero values of $m_k$ for $4 \le n \le 7$ are given in Table~\ref{Mj}.  

\begin{table}[h]
\normalsize
$$\begin{array}{|l|l|}
\hline
n& m_k \\ 
\hline
4 & 1, 2, 2, 3, 5\\
\hline
5 & 1, 1,3,3, 5, 5, 6,7\\
\hline
6&   1, 1,  1, 1, 1, 1, 3, 3, 4,  6, 7, 8, 8, 9, 9, 11, 15\\
\hline
7 & 1, 2, 2, 2, 2, 3, 3, 4, 5, 5, 7, 8, 9, 9, 13, 15, 15, 15, 16, 17, 19, 20, 20, 21, 26 \\ \hline
\end{array}$$
\caption{Values of non zero $m_k$ for small values of $n$.}
\label{Mj}
\end{table}

It remains to explicitly decompose each submodule $W'_k$ of $\mathfrak{B}$ into smaller `basic blocks'.

The method begins as follows. For each $k$, compute an element $v_k\in W_k$. Then define $U_k := \mathfrak{B} v_k$, a subspace of $W_k$.
Note that the vector $v_k$ can be obtained as $\epsilon_k x$ for any vector $x$ since $\epsilon_k $ is
idempotent. So, for instance, any non-zero row of $\epsilon_k$ affords a choice of $v_k$.

By Schur's lemma, the subspace $U_k$ meets each $V_k^i$ in a $1$-dimensional
subspace.  Then any $B_i \in \mathfrak{B}$ acts on $U_k$ by multiplication.  
If $(u_1, \dots, u_{m_k})$ is an orthogonal basis of $U_k$, then the basic block $B_{ik}:=B_i\vert_{U_k}$ of
$B_i$ can be computed as
\begin{eqnarray}
\label{basic-formula}
B_{ik}(j,l)= \frac{\langle B_i u_j, u_l \rangle}{||u_j||\ ||u_l||}
\end{eqnarray}
Each basic block $B_{ik}$ is an $m_k\times m_k$ matrix. Recall the matrix $B_i$ has
nonzero entry in the $(\phi, \psi)$ position iff $(\phi, \psi)\in O_i$. So the inner product in the numerator of (\ref{basic-formula}) is
$$\langle B_iu_j, u_l \rangle= \sum_{(\phi, \psi) \in O_i} u_j(\phi) u_l(\psi).$$

\begin{table}[h]
\normalsize
$$\begin{array}{|l| c |c | c|}
\hline
D& M_{SDP} & M_{LP} & \Pi_{d \in D} d\\
\hline
\emptyset & 1& 1  & 1\\
\hline
\{ 7\} & 7& 7  & 7\\
\hline
\{ 6\} & 28& 30  & 6\\
\hline
\{ 5\} & 14& 15  & 5\\
\hline
\{ 4\} & 9& 12  & 4\\
\hline
\{ 3\} & 6& 8  & 3\\
\hline
\{ 2\} & 2& 2  & 2\\
\hline
\{ 6,7\} & 42& 42  & 42\\
\hline
\{ 5,7\} & 47& 52  & 35\\
\hline
\{ 4,7\} & 34& 46  & 28\\
\hline
\{ 3,7\} & 27& 42  & 21\\
\hline
\{ 2,7\} & 12& 14  & 14\\
\hline
\{ 5,6\} & 30& 30  & 30\\
\hline
\{ 4,6\} & 47& 72  & 24\\
\hline
\{ 3,6\} & 31& 36  & 18\\
\hline
\{ 2,6\} & 34& 48  & 12\\
\hline
\end{array}
~~
\begin{array}{|l| c |c | c|}
\hline
D& M_{SDP} & M_{LP} & \Pi_{d \in D} d\\
\hline
\{ 4,5\} & 26& 60  & 20\\
\hline
\{ 3,5\} & 15& 15  & 15\\
\hline
\{ 2,5\} & 14& 15  & 15\\
\hline
\{ 3, 4\} & 21& 26  & 12\\
\hline
\{ 2,4\} & 9& 12  & 8\\
\hline
\{ 2,3\} & 7& 9  & 6\\
\hline
\{ 5,6,7\} & 134& 140  & 210\\
\hline
\{4, 6,7\} & 181 & 205  & 168\\
\hline
\{3, 6,7\} & 70& 84  & 126\\
\hline
\{2, 6,7\} & 59& 84  & 84\\
\hline
\{4, 5,7\} & 74& 93  & 140\\
\hline
\{3, 5,7\} & 54& 63  & 105\\
\hline
\{2, 5,7\} & 47& 52  & 70\\
\hline
\{3,4, 7\} & 49& 84  & 84\\
\hline
\{2,4, 7\} & 36& 46  & 56\\
\hline
\{2,3, 7\} & 32& 42  & 42\\
\hline
\end{array}$$

$$\begin{array}{|l| c |c | c|}
\hline
D& M_{SDP} & M_{LP} & \Pi_{d \in D} d\\
\hline
\{4,5,6\} & 120& 120  & 120\\
\hline
\{3,5,6\} & 66& 108  & 90\\
\hline
\{2,5,6\} & 42& 60  & 60\\
\hline
\{3,4,6\} & 50& 72  & 72\\
\hline
\{2,4,6\} & 48& 72  & 48\\
\hline
\{2,3,6\} & 35 & 54  & 36\\
\hline
\{3,4,5\} & 60& 60  & 60\\
\hline
\{2,4,5\} & 32& 60  & 40\\
\hline
\{2,3,5\} & 15 & 15  & 30\\
\hline
\{2,3,4\} & 24 & 32  & 24\\
\hline
\{4,5,6,7\} & 535& 543  & 840\\
\hline
\{3,5,6,7\} & 380& 420  & 630\\
\hline
\{2,5,6,7\} & 160& 172  & 420\\
\hline
\{3,4,6,7\} & 207& 280  & 504\\
\hline
\{2,4,6,7\} & 181& 205  & 336\\
\hline
\{2,3,6,7\} & 74 & 84  & 252\\
\hline
\end{array}
~~
\begin{array}{|l| c |c | c|}
\hline
D& M_{SDP} & M_{LP} & \Pi_{d \in D} d\\
\hline
\{3,4,5,7\} & 81& 93  & 420\\
\hline
\{2,4,5,7\} & 90& 140  & 280\\
\hline
\{2,3,5,7\} & 54 & 63  & 210\\
\hline
\{2,3,4,7\} & 77 & 168  & 168\\
\hline
\{3,4,5,6\} & 360& 360  & 360\\
\hline
\{2,4,5,6\} & 120& 120  & 240\\
\hline
\{2,3,5,6\} & 77 & 108  & 180\\
\hline
\{2,3,4,6\} & 51 & 72  & 144\\
\hline
\{2,3,4,5\} & 120 & 120  & 120\\
\hline
\{3,4,5,6,7\} & 2520& 2520  & 2520\\
\hline
\{2,4,5,6,7\} & 612& 630  & 1680\\
\hline
\{2,3,5,6,7\} & 380 & 420  & 1260\\
\hline
\{2,3,4,6,7\} & 207 & 280  & 1008\\
\hline
\{2,3,4,5,7\}  & 120 & 168  & 840\\
\hline
\{2,3,4,5,6\} & 720 & 720  & 720\\
\hline
\{2,3,4,5,6,7\} & 5040 & 5040  & 5040\\
\hline
\end{array}$$
\caption{LP and SDP bounds for $\Sym(7)$}
\label{sdp7}
\end{table}

To summarize, we have a block-diagonalization of the centralizer algebra $\mathfrak{B}$.  

\begin{prop}
The algebra $\mathfrak{B}$ is $*$-isomorphic to $\bar{\mathfrak{B}}=\bigoplus_{k=0}^{m-1} \C^{m_k\times m_k}$
generated by the diagonal joins of basic blocks $B_{ik}$. 
\end{prop}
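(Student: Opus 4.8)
The plan is to invoke the standard Wedderburn-type decomposition for the centralizer of a representation of a finite group, specialized to $G = \Iso_1(n)$ acting on $X = \Sym(n)$ via the representation $\mathfrak{K}$, and then to identify the abstract Wedderburn components with the concrete matrices $B_{ik}$ built above. Concretely, I would proceed as follows. First, recall that $\mathfrak{B} = \operatorname{End}_G(\mathfrak{K})$, which is a $\C^*$-algebra since it is closed under Hermitian conjugation (the $B_l$ have real entries and the orbit relations $O_l$ are symmetric up to re-indexing, so $\mathfrak{B}$ is $*$-closed). Second, use the module decomposition already set up in the text: $\mathfrak{K} = \bigoplus_k W_k$ with $W_k \cong \bigoplus_{i=1}^{m_k} V_k^i$ and all $V_k^i$ isomorphic to a fixed irreducible $G$-module of dimension $d_k = \chi_k(\id)$. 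By Schur's lemma, $\operatorname{End}_G(W_k) \cong \C^{m_k \times m_k}$, and $\operatorname{Hom}_G(W_k, W_{k'}) = 0$ for $k \ne k'$; hence abstractly $\mathfrak{B} \cong \bigoplus_k \C^{m_k \times m_k}$, with the $k$-th summand being $\epsilon_k \mathfrak{B} \epsilon_k$ cut out by the central idempotent of (\ref{epsilon}). This already gives $\dim \mathfrak{B} = \sum_k m_k^2 = b_n$, consistent with Proposition~\ref{orbit1}.

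The content beyond the abstract statement is that the map $\kappa$ realizing this isomorphism can be taken to send each generator $B_i$ to the block-diagonal join $\bigoplus_k B_{ik}$ of the matrices defined in (\ref{basic-formula}), and that this map is a $*$-homomorphism. To see this, fix for each $k$ the subspace $U_k = \mathfrak{B} v_k$ with $v_k = \epsilon_k x$ a nonzero vector in $W_k$. Since $v_k$ generates $W_k$ as a $\mathfrak{B}$-module (any row of $\epsilon_k$ is a cyclic vector, because $\epsilon_k$ has rank $d_k m_k$ and $\mathfrak{B}$ acts with commutant the scalars on each $V_k^i$), $U_k$ meets each isotypic copy $V_k^i$ in exactly a line, so $\dim U_k = m_k$. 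The action $B_i \mapsto B_i|_{U_k}$, expressed in an orthogonal basis $(u_1,\dots,u_{m_k})$ of $U_k$ by the Gram-type formula (\ref{basic-formula}), is then precisely the matrix of the operator $B_i$ on the $\mathfrak{B}$-module $U_k$. Because $U_k$ is an irreducible $\mathfrak{B}$-module affording the $k$-th simple component (each distinct $k$ giving a non-isomorphic $\mathfrak{B}$-module), the direct sum $\bigoplus_k (B_i|_{U_k})$ over a transversal of the $m$ distinct simple components is a faithful representation of $\mathfrak{B}$; faithfulness plus dimension count ($\sum m_k^2 = \dim \mathfrak{B}$) forces it to be an isomorphism onto $\bar{\mathfrak{B}} = \bigoplus_k \C^{m_k\times m_k}$. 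That it respects products is immediate from it being the matrix of a module action; that it respects $*$ follows because the $u_j$ are chosen orthonormal (after normalizing, as in (\ref{basic-formula})), so $\langle B_i^* u_j, u_l\rangle = \overline{\langle B_i u_l, u_j\rangle}$, i.e. $\kappa(B_i^*) = \kappa(B_i)^*$. A $*$-isomorphism of $\C^*$-matrix algebras preserves positive semidefiniteness, as recalled at the start of this section, which is the property we ultimately need for the SDP.

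The main obstacle is the verification that $U_k = \mathfrak{B}v_k$ genuinely has dimension $m_k$ and hits each simple component exactly once — equivalently, that $v_k$ is a cyclic vector for the $\mathfrak{B}$-action on $W_k$ and that distinct $k$ yield inequivalent irreducible $\mathfrak{B}$-modules $U_k$. This is where Schur's lemma for the pair $(\mathfrak{K}, \mathfrak{B})$ of mutually centralizing algebras does the real work: the double-centralizer theorem guarantees that $\mathfrak{B}$ acting on the $\chi_k$-isotypic block $W_k$ is a full matrix algebra $\C^{m_k\times m_k}$, so its unique (up to isomorphism) irreducible module has dimension $m_k$, and a generic $v_k \in W_k$ — in particular one of the form $\epsilon_k x$ — generates it. I would spell out this double-centralizer argument carefully, since everything else (the product and $*$ compatibility, the dimension bookkeeping via Table~\ref{Mj}) is routine once the structure of $U_k$ is pinned down. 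A secondary, purely bookkeeping point is to confirm that the index $k$ ranges over the $m$ \emph{distinct} simple components appearing (some $m_k$ vanish, as Table~\ref{Mj} shows), so the direct sum in the statement is over the correct set; this follows from $m_k = \sum_l a_k^l$ and the observation that exactly $m$ of these multiplicities are nonzero for the conjugacy scheme in question.
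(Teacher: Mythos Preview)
Your approach is essentially the same as the paper's: the proposition is presented there as a summary of the preceding module-theoretic discussion (the isotypic decomposition $\mathfrak{K} = \bigoplus_k W_k$, the construction $U_k = \mathfrak{B}v_k$, Schur's lemma giving $\dim U_k = m_k$, and formula~(\ref{basic-formula}) for the basic blocks), with the general Wedderburn/double-centralizer theory deferred to Gijswijt~\cite{Gij} and Vallentin~\cite{V}; you have simply spelled these steps out more explicitly. One small slip in your final bookkeeping remark: the claim that exactly $m$ of the multiplicities $m_k$ are nonzero is not correct---for instance Table~\ref{Mj} shows $8$ nonzero $m_k$ for $n=5$ while $m = p(5) = 7$, and $17$ for $n=6$ while $p(6)=11$---so the index range in the statement should simply be read as running over those $k$ with $m_k>0$, not literally $0,\dots,m-1$.
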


The $*$-isomorphism is, in principle, possible to compute via (\ref{basic-formula}) and some calculations involving the conjugacy representation of $\Sym(n)$.  In such a calculation, say with $n=7$, a challenging first step is the enumeration of the orbits of $\Sym(n)^2$ under $\Iso_1(n)$.  But in this calculation, when possible we may compute the corresponding entries of the basic blocks `on the fly' one orbit at a time without storing the (large) algebra $\mathcal{B}$ in memory.

In any case, this $*$-isomorphism induces a reduction of our semidefinite program along the lines of \cite{Gij,V}.


\begin{corollary}
The semidefinite program in $(\ref{sdp-statement})$ is invariant under block-diagonalization of each $B_i$, the resulting program giving an upper bound for $M(n,D)$.
\end{corollary}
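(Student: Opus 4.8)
The plan is to transport the semidefinite program $(\ref{sdp-statement})$ through the $*$-isomorphism $\kappa : \mathfrak{B}\to\bar{\mathfrak{B}}=\bigoplus_k\C^{m_k\times m_k}$ of the preceding proposition, and to check that every piece of data defining the program transports correctly.

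First I would record that, for \emph{every} real choice of the scalars $a_i$ --- not only those coming from an actual code --- both matrix variables $R_1=B_1+\sum_i a_iB_i$ and $R_2=I-B_1+\sum_i a_iB'_i$ lie in $\mathfrak{B}$. For $R_1$ this is immediate, since the $B_i$ span the linear space $\mathfrak{B}$. For $R_2$ one uses in addition that the identity $I=\sum_l E'_l$ is a sum of the diagonal-supported orbit matrices (hence lies in $\mathfrak{B}$), and that each $B'_i$ lies in $\mathfrak{B}$: by the explicit formulas in the proof of the proposition asserting $R_\Gamma,R'_\Gamma\in\mathfrak{B}$, one has $B'_i=-B_i$ when $i\in I_2$ and $B'_i=A_j-E'_j$ for an appropriate $j$ when $i\in I_1$, with $A_j,E'_j\in\mathfrak{T}\subseteq\mathfrak{B}$. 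Thus $\kappa$ may legitimately be applied to the matrix-valued constraints of $(\ref{sdp-statement})$ as a whole.

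Next I would apply $\kappa$. Using $\kappa(B_i)=\bigoplus_k B_{ik}$, with $B_{ik}$ the basic block of $(\ref{basic-formula})$, this gives $\kappa(R_1)=\bigoplus_k\bigl(B_{1k}+\sum_i a_iB_{ik}\bigr)$ and $\kappa(R_2)=\bigoplus_k\bigl(I_{m_k}-B_{1k}+\sum_i a_iB'_{ik}\bigr)$, where $B'_{ik}$ is the $k$-th basic block of $B'_i$. Since a $*$-isomorphism of matrix algebras preserves Hermiticity and positive semidefiniteness, the constraint $R_1\succeq0$ is equivalent to the family of $m_k\times m_k$ constraints $B_{1k}+\sum_i a_iB_{ik}\succeq0$, one per index $k$, and likewise for $R_2$; this is the step that actually shrinks the program. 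The scalar constraints need a short separate check. Because $O_1=\{(\id,\id)\}$, the entries $R_1(\id,\id)$ and $R_2(\id,\id)$ equal $1$ and $0$ identically in the $a_i$, so those two constraints are vacuous. Because $\Iso_1(n)$ acts by isometries, the Hamming distance is constant on each orbit $O_l$, so the conditions ``$R_1(\phi,\psi)=R_2(\phi,\psi)=0$ whenever $d_H(\phi,\psi)\notin D$'' reduce to ``$a_l=0$ for every $l\neq1$ with $d_H(O_l)\notin D$'' --- linear conditions on the scalar vector $(a_i)$ that block-diagonalization leaves untouched. Finally, the objective transports through the multiplicity-weighted trace identity $\trace(X)=\sum_k d_k\,\trace\bigl(\kappa(X)_k\bigr)$, valid for every $X\in\mathfrak{B}$, where $d_k=\chi_k(\id)$ is the multiplicity with which the $k$-th block sits inside $M_{n!}(\C)$; thus the objective becomes $\sum_k d_k\,\trace\bigl(B_{1k}+\sum_i a_iB_{ik}\bigr)$. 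Assembling these observations, $(\ref{sdp-statement})$ and its block-diagonalized counterpart are the same optimization problem in the variables $a_i$ and hence have equal optimal value, so the inequality $M(n,D)\le M_{SDP}$ following $(\ref{sdp-statement})$ carries over verbatim to the reduced program.

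I expect the only real obstacle to be the bookkeeping just described, rather than any conceptual difficulty: one must confirm that the entrywise vanishing constraints genuinely collapse to linear conditions on the $a_i$ (so that they commute with $\kappa$), and that the objective acquires precisely the multiplicities $d_k$, so that the reduced program is truly \emph{equivalent} to $(\ref{sdp-statement})$ and not merely a relaxation of it. Everything else is a direct application of the preceding proposition together with the fact, already noted, that $*$-isomorphisms of matrix algebras preserve positive semidefiniteness.
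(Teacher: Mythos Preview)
Your proposal is correct and follows the same line the paper intends: the corollary is stated without proof, being treated as an immediate consequence of the preceding $*$-isomorphism together with the general reduction principle in \cite{Gij,V}, and you have simply spelled out the bookkeeping that justifies this. One minor inaccuracy worth fixing: your claim that $B'_i=A_j-E'_j$ for all $i\in I_1$ is only correct for the orbits of type $\{(\sigma,\id):\sigma\in C_j\}$; for the orbits of type $\{(\id,\sigma)\}$ or $\{(\sigma,\sigma)\}$ the paper's formula actually yields $B'_i=0$, though of course the conclusion $B'_i\in\mathfrak{B}$ still holds in every case.
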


After similar computations as in the previous section, we report SDP bounds for $n=7$ according to Table~\ref{sdp7}.

\Section{Concluding remarks}

For $n\ge 8$, the number of variables is presently too large for practical
implementation of the SDP, even after block-diagonalization.  But we are
hopeful that $n=8$ can soon be attacked with some new ideas for generating
orbits, block-diagonalizing, and possibly constraining the number of
variables.  In fact, should the SDP eventually scale up to $n=10$, there is
a chance that $M(10,9)<90$ can be proved in this way.  Following \cite{CKL},
this represents an opportunity for a possible alternate proof of the
nonexistence of projective planes of order 10.  This is probably a long way off, but we hope it offers some motivation for advancing the techniques in this paper.


\begin{thebibliography}{99}

\bibitem{BO}
 J.M.P. Balmaceda \and M. Oura, 
The Terwilliger algebras of the group association schemes of $S_5$ and $A_5$.
{\em Kyusho J. Math.} 48 (1994), 221--231.

\bibitem{B}
 M. Bogaerts, 
New upper bounds for the size of permutation codes via linear programming.
{\em Elec. J. Combin.} 17 (2010), \#R135.

\bibitem{CCD}
 W. Chu, C.J. Colbourn \and P. Dukes,
Permutation codes for powerline communication,
{\em Des. Codes Cryptog.} 32 (2004), 51--64.

\bibitem{CKL}
 C.J. Colbourn, T. Kl\o ve, and A.C.H. Ling,
Permutation arrays for powerline communication and mutually orthogonal
Latin squares.
{\em IEEE Trans. Information Theory} 50 (2004), 1289--1291.

\bibitem{DDP}
 E. de Klerk, C. Dobre \and D.V. Pasechnik, Numerical block
diagonalization of matrix $*$-algebras with application to semidefinite
programming.
{\em Math. Programming} 129 (2011), 91--111.

\bibitem{Del}
 P. Delsarte,
An algebraic approach to the association schemes of coding theory.
{\em Philips Res. Reports Suppl.} No. 10 (1973).

\bibitem{DV}
 M. Deza \and S.A. Vanstone,
Bounds for permutation arrays.
{\em J. Stat. Plan. Infer.} 2 (1978), 197--209.

\bibitem{DS}
 P. Dukes \and N. Sawchuck,
Bounds on permutation codes of distance four.
{\em J. Algebraic Combinatorics} 31 (2010), 143--158

\bibitem{F}
 H. Farahat,
The symmetric group as a metric space.
\emph{J. London Math. Soc.} 35 (1960), 215--220.

\bibitem{FV}
 H.C. Ferreira and A.J. Han Vinck,
Interference cancellation with permutation trellis codes.
{\em Vehicular Technology Conference} 5 (2000), 2401--2407.

\bibitem{FD}
 P. Frankl \and M. Deza,
On the maximum number of permutations with given maximal or minimal distance.
{\em Journal of Combinatorial Theory Series A} 22 (1977), 352--360.

\bibitem{Gij}
 D. Gijswijt,
Matrix algebras and semidefinite programming techniques for codes,
Ph.D. dissertation, 2005.

\bibitem{Godsil}
 C.D. Godsil,
Notes on association schemes:
\url{http://quoll.uwaterloo.ca/mine/Notes/assoc2.pdf},
2010.

\bibitem{Jackson}
 D. M. Jackson,
Counting cycles in permutations by group characters, with an application to
a topological problem.
{\em Trans. Amer. Math. Soc.} 299 (1987), 785--801.

\bibitem{KK}
 P. Keevash and C.Y. Ku,
A random construction for permutation codes and the covering radius.
{\em Des. Codes Cryptog.} 41 (2006), 79--86.

\bibitem{NS}
 M.A. Naimark and A.I.~Stern,
{\em Theory of Group Representations}, Springer-Verlag, New York, 1982.

\bibitem{T}
 H. Tarnanen,
Upper bounds on permutation codes via linear programming.
{\em European J. Combin.} 20 (1999), 101--114.

\bibitem{V}
F. Vallentin,
Symmetry in semidefinite programs.
\emph{Linear Algebra Appl.} 430 (2009), 360--369.

\bibitem{W}
 E.P. Wigner,
On representations of certain finite groups.
\emph{American Journal of Mathematics} 63 (1941), 57--63.

\end{thebibliography}
\end{document}